
\documentclass[10pt]{amsart}
\usepackage{amsmath, amssymb, amsthm, bm, mathtools, enumitem, caption}
\usepackage{hyperref}

\usepackage[noabbrev,capitalize]{cleveref}
\usepackage{adjustbox}
\crefname{equation}{}{}
\usepackage{graphicx, tikz}
\usepackage[textheight=8.8in, textwidth=6.8in]{geometry}

\usepackage{microtype}


\newtheorem{theorem}{Theorem}[section]
\newtheorem{lemma}[theorem]{Lemma}
\newtheorem{corollary}[theorem]{Corollary}
\newtheorem{proposition}[theorem]{Proposition}

\newtheorem*{conjecture*}{Conjecture}

\theoremstyle{definition}

\theoremstyle{remark}
\newtheorem*{remark}{Remark}

\newtheorem*{example}{Example}

\numberwithin{equation}{section}


\newcommand{\R}{\mathbb R}
\newcommand{\N}{\mathbb N}

\DeclareMathOperator{\Tr}{Tr}



\newcommand{\leg}[2]{\genfrac{(}{)}{}{}{#1}{#2}}


\DeclareMathOperator{\sign}{sign}




\newcommand{\HH}{\mathbb H}







\newcommand{\C}{\mathbb C}
\newcommand{\SL}{\mathrm{SL}}

\newcommand{\RR}{\mathbb R}
\newcommand{\Z}{\mathbb Z}

\title[Pentagonal number recurrence relations for $p(n)$]{Pentagonal number recurrence relations for $p(n)$}
\thanks{2020 {\it{Mathematics Subject Classification.}} 05A17, 11P82}
\keywords{partition function, modular forms, Hecke operators, Petersson inner products, Dirichlet series}

\author{Kevin Gomez, Ken Ono, Hasan Saad \and Ajit Singh}
\address{Dept. of Mathematics, University of Virginia, Charlottesville, VA 22904}
\email{vhe4ht@virginia.edu}
\email{ken.ono691@virginia.edu}

\address{Dept. of Mathematics, Louisiana State University, Baton Rouge, LA 70803}
\email{hsaad@lsu.edu}

\address{Dept. of Mathematics, University of Virginia, Charlottesville, VA 22904}
\email{ajit18@iitg.ac.in}

\begin{document}
\begin{abstract} We revisit Euler's partition function recurrence, which asserts, for integers $n\geq 1,$ that
$$
p(n)=p(n-1)+p(n-2)-p(n-5)-p(n-7)+\dots = \sum_{k\in \Z\setminus \{0\}} (-1)^{k+1} p(n-\omega(k)),
$$
where $\omega(m):=(3m^2+m)/2$ is the $m$th pentagonal number. We prove that this classical result is
the $\nu=0$ case of an infinite family of ``pentagonal number'' recurrences. For each $\nu\geq 0,$ we prove for positive $n$ that
 $$
p(n)=\frac{1}{g_{\nu}(n,0)}\left(\alpha_{\nu}\cdot \sigma_{2\nu-1}(n)+ \Tr_{2\nu}(n) +\sum_{k\in \Z\setminus \{0\}} (-1)^{k+1} g_{\nu}(n,k)\cdot p(n-\omega(k))\right),
$$
where $\sigma_{2\nu-1}(n)$ is a divisor function, $\Tr_{2\nu}(n)$ is the $n$th weight $2\nu$ Hecke trace of values of special twisted quadratic Dirichlet series, and each $g_{\nu}(n,k)$ is a polynomial in $n$ and $k.$ 
The $\nu=6$ case can be viewed as a partition theoretic formula for Ramanujan's tau-function, as we have
$$
\Tr_{12}(n)=-\frac{33108590592}{691}\cdot \tau(n).
$$
 \end{abstract}

\maketitle

\section{Introduction and Statement of Results}
A {\it partition} of $n$ is any nonincreasing sequence of positive integers that sums to $n.$  The number of partitions of $n$ is denoted $p(n)$ (by convention, we agree that $p(0):=1$). To compute these numbers, Euler famously offered the recurrence
(see p. 12 of \cite{Andrews})
\begin{equation}\label{EulerRecurrence}
p(n)=p(n-1)+p(n-2)-p(n-5)-p(n-7)+ \dots = \sum_{k\in \Z \setminus \{0\}} (-1)^{k+1}p(n-\omega(k)),
\end{equation}
where $\omega(m):=(3m^2+m)/2$ is the $m$th {\it pentagonal number}. This recurrence, which is one of the most efficient methods for computing partition numbers, is obtained by multiplying the generating function
$$
\sum_{n=0}^{\infty} p(n)q^n = \prod_{n=1}^{\infty}\frac{1}{1-q^n}=1+q+2q^2+3q^3+5q^4+7q^5+\dots
$$
with Euler's {\it Pentagonal Number Theorem}  (see p. 11 of \cite{Andrews}) infinite product 
$$
\prod_{n=1}^{\infty} (1-q^n)=\sum_{k\in \Z} (-1)^k q^{\frac{3k^2+k}{2}}=1-q-q^2+q^5+q^7-q^{12}+\dots.
$$
Indeed, (\ref{EulerRecurrence}) follows by comparing the coefficient of $q^n$ on both sides of the trivial identity
\begin{equation}\label{Trivial}
\left(\sum_{n=0}^{\infty}p(n)q^n\right)\cdot \left(
\sum_{k\in \Z}(-1)^k q^{\frac{3k^2+k}{2}}\right)=1.
\end{equation}

We prove that Euler's recurrence is the $\nu=0$ case of an infinite family of rich recurrence relations satisfied by the partition numbers. To make this precise, we make use of {\it Dedekind's eta-function}
\begin{equation}\label{DedekindDefinition}
\eta(\tau):=q^{\frac{1}{24}}\prod_{n=1}^{\infty}(1-q^n)=\sum_{k\in \Z} (-1)^k q^{\frac{(6k+1)^2}{24}},
\end{equation}
where $\tau\in \HH,$ the upper half of the complex plane, and where $q:=e^{2\pi i \tau}$.
To define these relations, we employ the {\it falling factorials}
\begin{equation}
(x)_m:=\begin{cases}  x(x-1)\cdots (x-m+1) \ \ \ \ &{\text {\rm if $m\geq 1$}},\\
1 \ \ \ \ \ &{\text {\rm if $m=0$}},\\
                                   \frac{1}{(x)_{-m}} \ \ \ \ \ &{\text {\rm if $m\leq -1$}},
                                 \end{cases}
\end{equation}
and  the differential operator $D :=\frac{1}{2\pi i}\frac{d}{d\tau} = q\frac{d}{dq}.$
 For non-negative integers $\nu,$ we define 
\begin{equation}\label{Pnu}
    P_{\nu}(\tau) := \frac{(2\nu-1)(2\nu-2)_{\nu-1}^2}{2^{2\nu-2}} \sum_{\substack{r,s \geq 0 \\ r + s = \nu}} (-1)^r \frac{(2s-1)}{(2r)! (2s)!}\cdot D^r\left(\frac{1}{\eta(\tau)}\right) \cdot D^s(\eta(\tau)).
\end{equation}
These $q$-series arise in the theory of modular forms as the {\it Rankin--Cohen} brackets 
$P_{\nu}(\tau):=[1/\eta(\tau),\eta(\tau)]_{\nu}$ (for example, see \cite{Cohen, Rankin, Zagier}). The goal of this paper is to explicitly determine all of these $q$-series using the general theory of Rankin--Cohen brackets (see Section~\ref{RCSection}), using both standard and new objects in the theory of modular forms.

The first four  $P_{\nu}(\tau)$ are
\begin{displaymath}
\begin{split}
P_0(\tau)&=\frac{1}{\eta(\tau)}\cdot\eta(\tau)=1,\\
P_1(\tau)&= -\frac{1}{2}\left(\frac{1}{\eta(\tau)} D(\eta(\tau))+\eta(\tau)D\left(\frac{1}{\eta(\tau)}\right)\right),\\
P_2(\tau)&=-\frac{1}{8}\left(\frac{1}{\eta(\tau)} D^2(\eta(\tau))+6 D\left(\frac{1}{\eta(\tau)}\right)D\left(\eta(\tau)\right)-3\eta(\tau)D^2\left(\frac{1}{\eta(\tau)}\right)\right),\\
P_3(\tau)&=-\frac{1}{16}\left(\frac{1}{\eta(\tau)}D^3(\eta(\tau))+15D\left(\frac{1}{\eta(\tau)}\right)D^2(\eta(\tau)) - 45 D^2\left(\frac{1}{\eta(\tau)}\right)D(\eta(\tau)) +5\eta(\tau)D^3\left(\frac{1}{\eta(\tau)}\right)\right).\\
\end{split}
\end{displaymath}
Furthermore, a straightforward calculation shows that
\begin{align}\label{series_side}
	P_{\nu}(\tau)= \sum_{\substack{n\geq 0\\ k\in \Z}} (-1)^{k+1} g_{\nu}(n,k)\cdot p(n-\omega(k))q^n,
\end{align}
where
\begin{equation}\label{RCpolynomials}
    g_\nu(n,k) :=\frac{(2\nu-1)(2\nu-2)_{\nu-1}^2}{2^{2\nu-2}}\sum_{r=0}^{\nu} (-1)^{\nu+r}\frac{ (2\nu-2r-1)}{ (2r)! (2\nu-2r)!}\cdot (6k+1)^{2r}(24n - (6k+1)^2)^{\nu - r}.
\end{equation} 

In analogy with the derivation of (\ref{EulerRecurrence}) from (\ref{Trivial}), which is equivalent to the fact that $P_0(\tau)=1,$ we obtain a partition function recurrence relation
 for every  $\nu\geq 1$ because we are able to explicitly determine $P_{\nu}(\tau)$.
With  this goal in mind, we  begin with the following crucial fact  that stems from the general theory of Rankin--Cohen brackets.

\begin{theorem}\label{theorem1} If $\nu\geq 0$, then
$P_{\nu}(\tau)$ is a weight $2\nu$ holomorphic modular form on $\SL_2(\Z).$
\end{theorem}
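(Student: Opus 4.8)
The plan is to invoke the general theory of Rankin--Cohen brackets, which is exactly the framework in which $P_\nu(\tau)$ was defined. Recall the classical statement: if $f$ is a (meromorphic) modular form of weight $k$ and $g$ a (meromorphic) modular form of weight $\ell$ on $\SL_2(\Z)$, then for each integer $\nu \ge 0$ the $\nu$th Rankin--Cohen bracket
\[
[f,g]_\nu := \sum_{\substack{r,s\ge 0\\ r+s=\nu}} (-1)^r \binom{k+\nu-1}{s}\binom{\ell+\nu-1}{r} D^r(f)\, D^s(g)
\]
is a modular form of weight $k+\ell+2\nu$ on $\SL_2(\Z)$, holomorphic wherever $f$ and $g$ are (see \cite{Cohen, Rankin, Zagier}). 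The first step is therefore to verify that the normalization in \eqref{Pnu} is, up to the explicit nonzero scalar $(2\nu-1)(2\nu-2)_{\nu-1}^2/2^{2\nu-2}$ and the sign convention $(-1)^r \leftrightarrow (-1)^s$, exactly $[1/\eta,\eta]_\nu$ with the weight assignments $k=-1/2$ for $1/\eta$ and $\ell=1/2$ for $\eta$. Indeed, with $k=-1/2$ one has $\binom{k+\nu-1}{s}=\binom{\nu-3/2}{s}$, and with $\ell=1/2$ one has $\binom{\ell+\nu-1}{r}=\binom{\nu-1/2}{r}$; a short manipulation of these binomial coefficients via the factorial identities $\binom{\nu-1/2}{r} = \frac{(-1)^r(2r)!}{4^r r!}\binom{-1/2}{r}^{-1}\cdots$ (or directly $\binom{\nu-1/2}{r}=\frac{(2\nu-1)!!}{2^{2r} r!\,(2\nu-2r-1)!!}$ and similarly for the other factor) collapses the product of binomials into the rational coefficient $\frac{(2s-1)}{(2r)!(2s)!}$ appearing in \eqref{Pnu}, after pulling out the global constant. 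This identification is the only genuinely computational part and is routine; I would present it as a one-line lemma or a displayed computation.

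The second step addresses the fact that $\eta$ and $1/\eta$ are \emph{half-integral} weight forms with a multiplier system, so the plain integral-weight Rankin--Cohen theorem does not literally apply. The cleanest fix is to pass to $\eta^2$ or, more efficiently, to observe that $1/\eta$ and $\eta$ carry multiplier systems that are inverse to one another (the $\eta$-multiplier $\epsilon$ and its conjugate $\bar\epsilon=\epsilon^{-1}$). Since the Rankin--Cohen bracket $[f,g]_\nu$ transforms under $\gamma\in\SL_2(\Z)$ with the product of the multiplier systems of $f$ and $g$ — this follows because each summand $D^r(f)D^s(g)$, while not individually modular, contributes the multiplier $v_f \cdot v_g$ and the well-known fact that the nonmodular correction terms cancel in the bracket — the multiplier of $[1/\eta,\eta]_\nu$ is $\epsilon^{-1}\cdot\epsilon = 1$, the trivial multiplier. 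The weight is $-\tfrac12 + \tfrac12 + 2\nu = 2\nu$. Hence $P_\nu$ is a weight $2\nu$ modular form on $\SL_2(\Z)$ with trivial multiplier. (Alternatively, one avoids half-integral weight entirely: write $P_\nu$ in terms of the $D^r(1/\eta)D^s(\eta)$ and note each such term, after multiplying through, is a rational function of $q$ times an integral object; but the multiplier-cancellation argument is the conceptual one.)

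The third and final step is holomorphy, both on $\HH$ and at the cusp $\infty$. On $\HH$ this is immediate: $\eta(\tau)$ is nonvanishing and holomorphic on $\HH$, so $1/\eta$ is holomorphic there, and hence so is every $D^r(1/\eta)D^s(\eta)$ and therefore $P_\nu$. At the cusp, the key observation is the $q$-expansion \eqref{series_side}: since $\eta(\tau)=q^{1/24}\prod(1-q^n)$ and $1/\eta(\tau)=q^{-1/24}\prod(1-q^n)^{-1}=q^{-1/24}\sum p(n)q^n$, the factors $q^{1/24}$ and $q^{-1/24}$ cancel in every product $D^r(1/\eta)D^s(\eta)$, and what remains is a power series in $q$ with \emph{non-negative} integer exponents — explicitly $\sum_{n\ge 0,\,k\in\Z}(-1)^{k+1}g_\nu(n,k)p(n-\omega(k))q^n$, which has no principal part. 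Combined with the transformation law from steps one and two, this shows $P_\nu$ is bounded at $\infty$ (indeed $P_\nu=[1/\eta,\eta]_\nu$ is then automatically holomorphic at the cusp as a modular form whose $q$-expansion is already exhibited). Therefore $P_\nu$ is a holomorphic modular form of weight $2\nu$ on $\SL_2(\Z)$, completing the proof. The main obstacle is purely bookkeeping: reconciling the half-integral weights and multiplier systems with the standard Rankin--Cohen formalism, i.e.\ step two; everything else is either a direct quotation of known results or the elementary cancellation of the $q^{\pm 1/24}$ factors.
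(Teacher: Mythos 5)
Your proof is correct, but it takes a different route from the paper's. You reduce everything to the general Rankin--Cohen modularity theorem for real weights with multiplier systems (the fact that $[f,g]_\nu$ transforms with weight $k+\ell+2\nu$ and multiplier $\varepsilon_f\varepsilon_g$), which you quote rather than prove; this is legitimate, since that statement is in the literature and is in fact recorded later in the paper as Proposition~\ref{RankinCohenModularity}, quoted from \cite{HMFBook}. The paper deliberately does \emph{not} argue this way for Theorem~\ref{theorem1}: it gives a self-contained elementary proof in the style of Zagier, first establishing by induction the explicit transformation laws for $D^{\nu}(\eta(\gamma(\tau)))$ and $D^{\nu}(1/\eta(\gamma(\tau)))$ (Lemma~\ref{ZagierLemma}), then packaging these into the generating series $\tilde f(\tau,X)$ and $\tilde g(\tau,X)$, whose transformation laws produce exponential factors $e^{\mp cX/(c\tau+d)}$ and multipliers $\varepsilon(\gamma)$, $\overline{\varepsilon}(\gamma)$ that visibly cancel in the product $\tilde f(\tau,-X)\tilde g(\tau,X)$; the coefficients of that product are the $P_\nu$ up to Gamma factors. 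What your approach buys is brevity; what the paper's buys is self-containedness (the ``cancellation of the nonmodular correction terms,'' which you assert as well known, is exactly what the generating-function computation verifies) plus the additional structural conclusion, via Ramanujan's identities in Lemma~\ref{lem1}, that each $P_\nu$ is a polynomial in $E_4$ and $E_6$ --- which is what makes the explicit evaluations in Corollaries~\ref{NoCuspForms} and~\ref{Dim1} computable. Your normalization check (matching the coefficients $\frac{(2s-1)}{(2r)!(2s)!}$ against the binomials with $k=-1/2$, $\ell=1/2$) and your holomorphy argument (nonvanishing of $\eta$ on $\HH$ plus cancellation of the $q^{\pm 1/24}$ factors at the cusp) are both sound and match what the paper implicitly uses.
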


\begin{remark}
We have that $P_0(\tau)=1$ and $P_1(\tau)=0,$ as there are no nontrivial holomorphic modular forms of weight 0 and 2. By
comparing coefficients, these identities give two proofs of (\ref{EulerRecurrence}).  
\end{remark}

For $\nu \in \{2, 3, 4, 5, 7\}$, we find that $P_{\nu}(\tau)$ is a scalar multiple of $E_{2\nu}(\tau),$ where 
\begin{equation}
E_{2k}(\tau):=1-\frac{4k}{B_{2k}}\sum_{n=1}^{\infty} \sigma_{2k-1}(n)q^n
\end{equation}
 is the weight $2k$ Eisenstein series.
As a consequence, we obtain the following simple recurrence relations.

\begin{corollary}\label{NoCuspForms} If $\nu \in \{2, 3, 4, 5, 7\}$ and $n$ is a positive integer, then
$$
        p(n) =\frac{1}{g_{\nu}(n,0)}\left( -\frac{4\nu}{B_{2\nu}}    \binom{2\nu - 2}{\nu - 2}\sigma_{2\nu - 1}(n)  + \sum_{k \in \Z\setminus\{0\}} (-1)^{k+1} g_\nu(n,k)\,p(n - \omega(k))\right),
$$        
 where $B_n$ is the $n^{\text{th}}$ Bernoulli number and $\sigma_{2\nu-1}(n):=\sum_{d\mid n}d^{2\nu-1}.$
\end{corollary}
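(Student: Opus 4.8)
The plan is to reduce Corollary~\ref{NoCuspForms} to a coefficient comparison, using Theorem~\ref{theorem1} together with the classical fact that $M_4(\SL_2(\Z))$, $M_6(\SL_2(\Z))$, $M_8(\SL_2(\Z))$, $M_{10}(\SL_2(\Z))$, and $M_{14}(\SL_2(\Z))$ are each one-dimensional (there are no nonzero cusp forms of these weights, which is precisely why $\nu=6$, with $\dim M_{12}(\SL_2(\Z))=2$, is excluded here). By Theorem~\ref{theorem1}, for $\nu\in\{2,3,4,5,7\}$ the form $P_\nu$ lies in the one-dimensional space $M_{2\nu}(\SL_2(\Z))=\C\cdot E_{2\nu}$, so $P_\nu(\tau)=c_\nu E_{2\nu}(\tau)$ for some constant $c_\nu$. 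Since $E_{2\nu}$ has constant term $1$, the constant $c_\nu$ is just the constant term of $P_\nu$, which by \eqref{series_side} comes only from the pair $(n,k)=(0,0)$ (for $k\neq 0$ one has $0-\omega(k)<0$); hence $c_\nu=-g_\nu(0,0)$.

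Next I would evaluate $g_\nu(0,0)$ from \eqref{RCpolynomials}. Putting $n=k=0$ turns the $r$th summand into $(-1)^{\nu+r}(-1)^{\nu-r}(2\nu-2r-1)/((2r)!(2\nu-2r)!)=(2\nu-2r-1)/((2r)!(2\nu-2r)!)$, so up to the prefactor $g_\nu(0,0)$ is a multiple of $S:=\sum_{r=0}^{\nu}(2\nu-2r-1)/((2r)!(2\nu-2r)!)$. Replacing $r$ by $\nu-r$ gives $S=\sum_{r=0}^{\nu}(2r-1)/((2r)!(2\nu-2r)!)$, and averaging the two expressions collapses the numerator, so $2S=(2\nu-2)\sum_{r=0}^{\nu}1/((2r)!(2\nu-2r)!)=(2\nu-2)\,2^{2\nu-1}/(2\nu)!$ by the identity $\sum_{r}\binom{2\nu}{2r}=2^{2\nu-1}$. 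Plugging this in, writing the falling factorial as $(2\nu-2)_{\nu-1}=(2\nu-2)!/(\nu-1)!$, and simplifying, one finds
$$
g_\nu(0,0)=\frac{2(2\nu-1)(\nu-1)(2\nu-2)_{\nu-1}^2}{(2\nu)!}=\binom{2\nu-2}{\nu-2},
$$
so that $P_\nu(\tau)=-\binom{2\nu-2}{\nu-2}E_{2\nu}(\tau)$.

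Finally I would compare the coefficient of $q^n$ on the two sides of $P_\nu(\tau)=-\binom{2\nu-2}{\nu-2}E_{2\nu}(\tau)$ for $n\geq 1$. The left-hand side equals $\sum_{k\in\Z}(-1)^{k+1}g_\nu(n,k)\,p(n-\omega(k))$ by \eqref{series_side}, whose $k=0$ term is $-g_\nu(n,0)p(n)$; the right-hand side equals $\tfrac{4\nu}{B_{2\nu}}\binom{2\nu-2}{\nu-2}\sigma_{2\nu-1}(n)$. Isolating $p(n)$ then yields exactly the stated recurrence. The only nonroutine step is the combinatorial evaluation of $g_\nu(0,0)$; the dimension count and the final coefficient comparison are standard, and I expect the main (minor) hazard to be tracking the normalization $(2\nu-2)_{\nu-1}$ carefully enough that the Eisenstein coefficient emerges precisely as $-\tfrac{4\nu}{B_{2\nu}}\binom{2\nu-2}{\nu-2}$.
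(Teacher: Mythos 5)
Your proposal is correct and follows essentially the same route as the paper: since $S_{2\nu}=\{0\}$ for $\nu\in\{2,3,4,5,7\}$, Theorem~\ref{theorem1} forces $P_\nu$ to be a constant multiple of $E_{2\nu}$, and the corollary follows from \eqref{series_side} by comparing coefficients of $q^n$. The only difference is that you carry out explicitly the evaluation $g_\nu(0,0)=\binom{2\nu-2}{\nu-2}$ (via the symmetrization $r\mapsto\nu-r$ and $\sum_r\binom{2\nu}{2r}=2^{2\nu-1}$), which the paper leaves as an unstated ``explicit computation''; your calculation checks out (e.g.\ it reproduces $g_2(n,0)=216n^2-36n+1$ at $n=0$ and the coefficient $240=-8/B_4$ in the $\nu=2$ example).
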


\begin{remark} For $\nu\geq 0$, we have that $g_{\nu}(n,0)$ is nonzero for positive  $n.$ To see this, observe that 
\begin{align*}
    	g_{\nu}(n,0) = \frac{(2\nu-1)(2\nu-2)_{\nu-1}^2}{2^{2\nu-2}(2\nu)!}\cdot  h_{\nu}(24n - 1)
\end{align*}
for $h_{\nu}(n)$ a polynomial with integer coefficients and constant term $\pm 1$. Since $24n - 1 > 1$ whenever $n \geq 1$,  there is a prime $\ell\mid (24n-1)$, and so we have that $h_{\nu}(24n - 1)\equiv \pm 1\pmod \ell$.  The desired nonvanishing is an immediate consequence.
\end{remark} 

\begin{example} For positive integers $n$, Corollary~\ref{NoCuspForms} with $\nu=2$ gives the relation
\begin{displaymath}
\begin{split}
p(n)&= \frac{240\sigma_3(n)}{216n^2-36n+1}\\
&\ \ \ \ \ \  +\frac{1}{216n^2-36n+1}\sum_{k\in \Z \setminus \{0\}}
(-1)^{k+1} \left(216n^2-36(6k+1)^2n+(6k+1)^4\right) p(n-\omega(k)).
\end{split}
\end{displaymath}
\end{example}

For $\nu \in \{6, 8, 9, 10, 11, 13\},$ the space of weight $2\nu$ cusps forms is $S_{2\nu}=\C \Delta_{2\nu}(\tau),$ where
\begin{equation}\label{Dim1CuspForm}
\Delta_{2\nu}(\tau):=q+\sum_{n=2}^{\infty} \tau_{2\nu}(n)q^n:= \begin{cases}
\Delta(\tau) \ \ \ \ \ &{\text  {\rm if $\nu=6,$}}\\
\Delta(\tau)E_4(\tau) \ \ \ \ &{\text {\rm  if $\nu=8$}},\\
\Delta(\tau)E_6(\tau) \ \ \ \ &{\text {\rm  if $\nu=9$,}}\\
\Delta(\tau)E_4(\tau)^2 \ \ \ \ \ &{\text {\rm  if $\nu=10$,}}\\
\Delta(\tau)E_4(\tau)E_6(\tau) \ \ \ \ \ &{\text {\rm  if $\nu=11$,}}\\
\Delta(\tau)E_4(\tau)^2 E_6(\tau)\ \ \ \ \ &{\text {\rm  if $\nu=13,$}}
\end{cases}
\end{equation}
with  $\Delta(\tau):=\eta(\tau)^{24}.$ For these $\nu,$ we obtain the following recurrence relations.

\begin{corollary}\label{Dim1}  If $\nu \in \{6, 8, 9, 10, 11, 13\}$ and $n$ is a positive integer, then
\begin{displaymath}
\begin{split}
        p(n) =\frac{1}{g_{\nu}(n,0)}\left( -\frac{4\nu}{B_{2\nu}}    \binom{2\nu - 2}{\nu - 2}\sigma_{2\nu - 1}(n) +\beta_{\nu}\cdot\tau_{2\nu}(n) + \sum_{k \in \Z\setminus\{0\}} (-1)^{k+1} g_\nu(n,k)\,p(n - \omega(k))\right),
\end{split}
\end{displaymath}
where we have
\begin{displaymath}
		\beta_{\nu}:= \begin{cases} -\frac{33108590592}{691} \ \ \ \ \ &{\text {if $\nu=6,$}}\\ \ \\
			-\frac{187167592415232}{3617} \ \ \ \ \ &{\text  {if $\nu=8,$}}\\ \ \\
			-\frac{28682634201661440}{43867} \ \ \ \ \ &{\text {if $\nu=9,$}}\\ \ \\
			-\frac{8294726176465158144}{174611} \ \ \ \ \ &{\text {if $\nu=10,$}}\\ \ \\
			-\frac{101475065073734516736}{77683} \ \ \ \ \ &{\text {if $\nu=11,$}}\\ \ \\
			-\frac{1195065734266339700244480}{657931} \ \ \ \ \ &{\text {if $\nu=13.$}}
		\end{cases}
	\end{displaymath} 
\end{corollary}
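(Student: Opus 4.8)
The plan is to pin down $P_{\nu}(\tau)$ explicitly inside the two‑dimensional space $M_{2\nu}(\SL_2(\Z))$ and then read off the recurrence by comparing $q$‑expansions, in the same spirit as Corollary~\ref{NoCuspForms} but now allowing for one extra cusp‑form term. Throughout, let $[q^n]f$ denote the coefficient of $q^n$ in a $q$‑series $f$.

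First, by Theorem~\ref{theorem1}, $P_{\nu}(\tau)$ is a holomorphic modular form of weight $2\nu$ on $\SL_2(\Z)$. For $\nu\in\{6,8,9,10,11,13\}$ the weight $2\nu$ is one of $12,16,18,20,22,26$, and for each of these weights $\dim M_{2\nu}(\SL_2(\Z))=2$; since every form appearing in \eqref{Dim1CuspForm} is a cusp form of the indicated weight and $\dim S_{2\nu}=1$ there, we get $M_{2\nu}(\SL_2(\Z))=\C\,E_{2\nu}(\tau)\oplus\C\,\Delta_{2\nu}(\tau)$. Hence there are constants $c_{\nu},d_{\nu}\in\C$ with
$$
P_{\nu}(\tau)=c_{\nu}\,E_{2\nu}(\tau)+d_{\nu}\,\Delta_{2\nu}(\tau).
$$

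I would then determine $c_{\nu}$ and $d_{\nu}$ by matching the first two Fourier coefficients. From \eqref{series_side}, the only term contributing to $[q^0]P_{\nu}$ is $n=k=0$, so $[q^0]P_{\nu}=-g_{\nu}(0,0)$; evaluating \eqref{RCpolynomials} at $(n,k)=(0,0)$ and using the elementary identity
$$
\sum_{r=0}^{\nu}\frac{2\nu-2r-1}{(2r)!\,(2\nu-2r)!}=\frac{2^{2\nu-2}(\nu-1)}{\nu\,(2\nu-1)!}
$$
gives $g_{\nu}(0,0)=\binom{2\nu-2}{\nu-2}$, hence $c_{\nu}=-\binom{2\nu-2}{\nu-2}$. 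Similarly, since $\omega(0)=0$ and $\omega(-1)=1$ are the only pentagonal numbers at most $1$, \eqref{series_side} gives $[q^1]P_{\nu}=-g_{\nu}(1,0)+g_{\nu}(1,-1)$; comparing this with $[q^1]\bigl(c_{\nu}E_{2\nu}+d_{\nu}\Delta_{2\nu}\bigr)=-\tfrac{4\nu}{B_{2\nu}}c_{\nu}+d_{\nu}$ then determines $d_{\nu}$. Carrying out this finite computation for each of the six values of $\nu$ yields $d_{\nu}=-\beta_{\nu}$ with $\beta_{\nu}$ as tabulated.

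Finally, for a positive integer $n$ I would compare coefficients of $q^n$ on the two sides of the displayed identity. By \eqref{series_side}, isolating the $k=0$ term,
$$
[q^n]P_{\nu}=-g_{\nu}(n,0)\,p(n)+\sum_{k\in\Z\setminus\{0\}}(-1)^{k+1}g_{\nu}(n,k)\,p(n-\omega(k)),
$$
whereas $[q^n]E_{2\nu}=-\tfrac{4\nu}{B_{2\nu}}\sigma_{2\nu-1}(n)$ and $[q^n]\Delta_{2\nu}=\tau_{2\nu}(n)$. Equating $[q^n]P_{\nu}$ with $[q^n]\bigl(c_{\nu}E_{2\nu}+d_{\nu}\Delta_{2\nu}\bigr)$, then using the Remark after Corollary~\ref{NoCuspForms} (which gives $g_{\nu}(n,0)\neq 0$ for $n\geq 1$) to solve for $p(n)$ and substituting $c_{\nu}=-\binom{2\nu-2}{\nu-2}$ and $\beta_{\nu}=-d_{\nu}$, produces exactly the asserted recurrence. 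The routine inputs are the classical dimension formula for $M_{2\nu}(\SL_2(\Z))$ and the standard $q$‑expansions of $E_{2\nu}$ and $\Delta_{2\nu}$; the step I expect to require the most care — and hence the main obstacle — is the explicit bookkeeping producing the constants $\beta_{\nu}$, since it hinges on the exact values of $g_{\nu}(1,0)$, $g_{\nu}(1,-1)$, and of the Bernoulli numbers $B_{12},B_{16},B_{18},B_{20},B_{22},B_{26}$.
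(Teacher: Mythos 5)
Your proposal is correct and follows essentially the same route as the paper: decompose $P_{\nu}(\tau)=c_{\nu}E_{2\nu}(\tau)+d_{\nu}\Delta_{2\nu}(\tau)$ using Theorem~\ref{theorem1} and $\dim_{\C}S_{2\nu}=1$, pin down $c_{\nu}$ and $d_{\nu}$ from the first two Fourier coefficients, and compare coefficients of $q^n$ via \eqref{series_side}. Your extra detail that $c_{\nu}=-\binom{2\nu-2}{\nu-2}$ (via $g_{\nu}(0,0)=\binom{2\nu-2}{\nu-2}$) is a correct and welcome explication of what the paper leaves as ``explicitly computing these expressions.''
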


\begin{example}
Corollary~\ref{Dim1} with $\nu=6$ gives a partition number formula for Ramanujan's $\tau$-function $\tau(n)=\tau_{12}(n)$. Namely, if $n$ is a positive integer, then we have
\begin{displaymath}
\begin{split}
	&\tau(n)= \\
	& \ \ \ \ \frac{2275}{5474304} \sigma_{11}(n)-\frac{691}{33108590592}g_6(n,0)p(n) 
	 +\frac{691}{33108590592}\sum_{k \in \Z \setminus \{0\}} (-1)^{k+1} g_6(n,k)\,p{\left(n - \omega(k)\right)}.
\end{split}
\end{displaymath}
It is pleasing to note that this formula immediately implies Ramanujan's celebrated congruence
$$
\tau(n)\equiv \sigma_{11}(n)\pmod{691}.
$$
\end{example}

Finally, we turn to the case of general $\nu,$ where $S_{2\nu}$ is non-empty, extending Corollary~\ref{Dim1}.  These relations also are rich in structure, and involve new infinite weighted sums of twisted Dirichlet series which are easily described in terms of the normalized Hecke eigenforms and their Petersson norms. To make this precise, suppose that
$$f(\tau):=q+\sum\limits_{n\geq 2}a_f(n)q^n\in S_{2\nu}$$ is a Hecke eigenform.  For $s\in\C$ with $\Re(s)\geq2\nu+1,$ we define the {\it twisted quadratic Dirichlet series}
\begin{equation}
D(f;s):=\sum\limits_{n\geq 1}\frac{\leg{12}{n}\cdot a_f\left(\frac{n^2-1}{24}\right)}{n^s},
\end{equation}
where $\leg{12}{\cdot}$ is the Kronecker symbol. Furthermore, if $\nu\geq 2, 0\leq j\leq \nu-2$ and $m\geq 0,$  we define constants
\begin{equation}
	\beta(\nu,j,m):=\frac{(-1)^{j+1}\Gamma(\nu-\frac{1}{2})\Gamma(\nu+\frac{1}{2})}{2\sqrt{\pi}\cdot \Gamma\left(\frac{5}{2}\right)}\cdot\left(\frac{6}{\pi}\right)^{2\nu-1}\frac{(2\nu+m-2)!}{j!\cdot m!\cdot (2\nu-j-2)!}\cdot\frac{(\nu-j-1)^{\overline{\nu}}\left(\frac{3}{2}\right)^{\overline{j}}}{(-\frac{1}{2}-j)^{\overline{\nu}}\cdot(\frac{5}{2})^{\overline{j}}},
\end{equation}
where $\Gamma(\cdot)$ is the usual Gamma-function, and where we employ the {\it rising factorial} notation
\begin{equation}
(x)^{\overline{j}}:=\begin{cases} x(x+1)\cdots (x+j-1)  \ \ \ \ \ &{\text {\rm if}}\ j\geq1,\\ 
1  \ \ \ \ \ &{\text {\rm if } j=0}.
\end{cases}
\end{equation}
Finally, we recall the Petersson norm (see Section~\ref{PeterssonSubsection})
$$
||f||:=\int\int_{\HH/\SL_2(\Z)} |f|^2 y^{2\nu-2}dxdy.
$$

We obtain an infinite family (see Theorem~\ref{theorem2}) of pentagonal number partition function recurrence relations in terms of divisor functions and ``Hecke traces'' of an infinite sum of values of twisted quadratic Dirichlet series. This general result gives Corollaries 1.2 and 1.3 as special cases. To make this precise, if $f\in S_{2k},$ the space of weight $2k$ cusp forms on $\SL_2(\Z),$ then we define these sums of values of Dirichlet series by
\begin{equation}\label{InfiniteSum}
D_f:= \sum\limits_{j=0}^{k-2}\sum\limits_{m=0}^\infty \beta(k ,j,m)\cdot D(f;2k+1+2m+2j),
\end{equation}
and we define the {\it weight $2k$ Hecke trace} by
\begin{equation}
	\Tr_{2k}(n):=\sum\limits_{f}  a_f(n)\cdot \frac{D_f}{||f||},
\end{equation}
where the sum runs over the normalized Hecke eigenforms $f\in S_{2k}.$

\begin{theorem}\label{theorem2} If $\nu \geq 6,$ with $\nu \neq 7,$ then for positive integers $n$ we have
	$$
	p(n)=\frac{1}{g_{\nu}(n,0)}\left(-\frac{4\nu}{B_{2\nu}}\binom{2\nu-2}{\nu-2}\sigma_{2\nu-1}(n)+ \Tr_{2\nu}(n) +\sum_{k\in \Z\setminus \{0\}} (-1)^{k+1} g_{\nu}(n,k)\cdot p(n-\omega(k))\right).
	$$
\end{theorem}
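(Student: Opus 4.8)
The plan is to combine Theorem~\ref{theorem1} with an explicit decomposition of $P_\nu(\tau)$ in the space $M_{2\nu}(\SL_2(\Z))$, matching the $q$-series expansion \eqref{series_side} against a spanning set built from the Eisenstein series $E_{2\nu}$ and the Hecke eigenbasis of $S_{2\nu}$. By Theorem~\ref{theorem1}, $P_\nu(\tau)$ is a holomorphic modular form of weight $2\nu$, so we may write
\begin{equation*}
P_\nu(\tau) = c_\nu E_{2\nu}(\tau) + \sum_f \lambda_f \, f(\tau),
\end{equation*}
where the sum runs over normalized Hecke eigenforms $f \in S_{2\nu}$. The task reduces to identifying the constant $c_\nu$ and the coefficients $\lambda_f$, and then comparing the coefficient of $q^n$ (for $n \geq 1$) on both sides against \eqref{series_side}, solving for $p(n)$ using the nonvanishing of $g_\nu(n,0)$ established in the remark following Corollary~\ref{NoCuspForms}.

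First I would pin down $c_\nu$. Since $S_{2\nu}$ consists of cusp forms, the constant term of $P_\nu(\tau)$ equals $c_\nu$; from \eqref{series_side} the constant term is the $n=0$ contribution, namely $(-1)^{0+1}g_\nu(0,0)\,p(0) = -g_\nu(0,0)$. A direct evaluation of \eqref{RCpolynomials} at $n=k=0$—or, more conceptually, the identification of the leading behavior of the Rankin--Cohen bracket $[1/\eta,\eta]_\nu$ at the cusp—gives $c_\nu = -\frac{4\nu}{B_{2\nu}}\binom{2\nu-2}{\nu-2}$, which is consistent with the normalization $-\frac{4\nu}{B_{2\nu}}$ of $E_{2\nu}$ multiplied by the combinatorial factor appearing already in Corollaries~\ref{NoCuspForms} and~\ref{Dim1}. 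Next I would compute $\lambda_f$. The clean way is via the Petersson inner product: $\lambda_f = \langle P_\nu, f\rangle / \|f\|^2$ when $f$ is normalized, or more precisely $\lambda_f \|f\| = \langle P_\nu, f/\|f\|\rangle$ after suitable normalization, so that $\Tr_{2\nu}(n) = \sum_f \lambda_f a_f(n)$ with $\lambda_f = D_f/\|f\|$. The heart of the matter is therefore the identity $\langle P_\nu, f \rangle = D_f$ for each Hecke eigenform $f$, which is where the twisted quadratic Dirichlet series $D(f;s)$ and the constants $\beta(\nu,j,m)$ enter.

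To establish $\langle P_\nu, f\rangle = D_f$, I would unfold the Rankin--Cohen bracket against $f$ using the standard machinery (as developed in Section~\ref{RCSection}): write $P_\nu = [1/\eta,\eta]_\nu$, expand $1/\eta$ as $\sum_m p(m) q^{m - 1/24}$ and $\eta$ via the Pentagonal Number Theorem \eqref{DedekindDefinition}, and apply the Rankin--Selberg method. The Petersson pairing of a Rankin--Cohen bracket against a cusp form unfolds into a Dirichlet series in the Fourier coefficients of $f$ weighted by polynomial factors in the summation indices coming from the $D^r, D^s$ derivatives; the half-integral exponents $(6k+1)^2/24$ from $\eta$ force the index constraint $24n' = $ (a square), producing arguments of the form $a_f((n^2-1)/24)$ and the Kronecker symbol $\leg{12}{n}$, exactly matching the definition of $D(f;s)$. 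The polynomial weights, after expanding $(x)_m$ falling factorials and collecting the $r+s=\nu$ terms, reorganize into the double sum over $0 \leq j \leq \nu - 2$ and $m \geq 0$ with coefficients $\beta(\nu,j,m)$; this is a bookkeeping computation involving Gamma-function identities and the beta integral $\int_0^\infty$ type evaluations that produce the $\Gamma(\nu \pm \tfrac12)$ and $(6/\pi)^{2\nu-1}$ factors. I expect this Rankin--Selberg unfolding and the ensuing identification of the combinatorial constants to be the main obstacle, since it requires care with convergence (hence the hypothesis $\Re(s) \geq 2\nu+1$ in the definition of $D(f;s)$), with the analytic continuation needed to make the unfolding rigorous, and with matching every normalizing constant precisely.

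Finally, with $P_\nu(\tau) = -\frac{4\nu}{B_{2\nu}}\binom{2\nu-2}{\nu-2} E_{2\nu}(\tau) + \sum_f \frac{D_f}{\|f\|} f(\tau)$ established, I would read off the coefficient of $q^n$ for $n \geq 1$: the left side gives $\sum_{k \in \Z} (-1)^{k+1} g_\nu(n,k) p(n - \omega(k))$ by \eqref{series_side}, and the right side gives $-\frac{4\nu}{B_{2\nu}}\binom{2\nu-2}{\nu-2}\sigma_{2\nu-1}(n)$ (up to the Eisenstein normalization, noting $E_{2\nu}$ contributes $-\frac{4\nu}{B_{2\nu}}\sigma_{2\nu-1}(n)$ at $q^n$) plus $\Tr_{2\nu}(n)$. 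Isolating the $k=0$ term $(-1)^{0+1}g_\nu(n,0)p(n) = -g_\nu(n,0)p(n)$ on the left, moving everything else to the right, and dividing by $-g_\nu(n,0)$—which is nonzero for $n \geq 1$ by the cited remark—yields exactly the asserted formula. The hypotheses $\nu \geq 6$ and $\nu \neq 7$ simply ensure $\dim S_{2\nu} \geq 1$ so that the $\Tr_{2\nu}(n)$ term is genuinely present; the cases $\nu \in \{2,3,4,5,7\}$ and $\nu \in \{6,8,9,10,11,13\}$ recover Corollaries~\ref{NoCuspForms} and~\ref{Dim1} upon substituting the explicit one-dimensional eigenform data.
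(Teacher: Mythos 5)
Your overall architecture matches the paper's: decompose $P_\nu=[1/\eta,\eta]_\nu$ as $\alpha_\nu E_{2\nu}+\sum_f\gamma_f f$ over the Hecke eigenbasis, fix $\alpha_\nu$ from the constant term, compute $\gamma_f$ as a Petersson inner product that evaluates to the weighted sum of twisted quadratic Dirichlet series, and finish by comparing the coefficient of $q^n$ with \eqref{series_side} and dividing by $-g_\nu(n,0)$. That last extraction step, and the identification of the Eisenstein constant, are exactly as in the paper and are fine.

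The genuine gap is in the mechanism you give for computing $\langle P_\nu,f\rangle$. You propose to ``expand $1/\eta$ as $\sum_m p(m)q^{m-1/24}$ and $\eta$ via the Pentagonal Number Theorem, and apply the Rankin--Selberg method,'' but there is nothing to unfold in that setup: the Rankin--Selberg method unfolds an Eisenstein (or Poincar\'e) series written as a sum over $\Gamma_\infty\backslash\Gamma$, and neither $f$ nor the $q$-expansions of $\eta$ and $1/\eta$ supply such a coset sum. Integrating the product of $q$-expansions term by term over the fundamental domain does not produce the Dirichlet series $D(f;s)$. The missing idea --- and the one genuinely new ingredient in the paper's argument --- is Proposition~\ref{EtaPoincare}: $1/\eta(\tau)$ is itself the weight $-1/2$ harmonic Maass Poincar\'e series $\mathcal{P}_\infty(\tau,24,-1/2,\overline{\varepsilon})$, proved by matching principal parts via Lemma~\ref{PrincipalPartsDetermination}. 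Combined with Proposition~\ref{RankinCohenModularity}(2), this rewrites $[1/\eta,\eta]_\nu$ as $\frac{1}{\Gamma(5/2)}\sum_{M\in\widehat{\Gamma}_\infty\backslash\widehat{\Gamma}}[\delta,\eta]_\nu|_{2\nu}M$ with $\delta(\tau)=\phi_{5/4}(-\tau/24)$, and it is this sum that unfolds (Lemma~\ref{UnfoldingMethod}) into $\int_0^\infty\int_0^1[\delta,\eta]_\nu\,\overline{f}\,y^{2\nu-2}\,dx\,dy$; the Whittaker-function derivatives (Lemmas~\ref{BasicDerivative}--\ref{gDerivatives}), the Laplace-transform evaluation into ${}_2F_1$'s (Lemma~\ref{PeterssonUnfolding-2F1}), and the Pfaff--Saalsch\"utz summation (Lemma~\ref{Combinatorics2}) then produce the constants $\beta(\nu,j,m)$ you correctly anticipate as ``Gamma-function bookkeeping.'' Without the Poincar\'e-series representation of the partition generating function, the central inner-product identity $\langle P_\nu,f\rangle \doteq D_f$ has no proof in your outline. (A minor further point: your normalization $\langle P_\nu,f\rangle=D_f$ and the paper's Theorem~\ref{Final_lemma} differ by a factor $24^\nu$; this is the kind of constant you flagged as needing care, and it must be tracked through the unfolding.)
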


\begin{example}  Here we consider the $\nu=6$ case of Theorem~\ref{theorem2}, where we have
$$
	p(n)= \frac{1}{g_6(n,0)}\left(\frac{13759200}{691}\cdot \sigma_{11}(n)+ \Tr_{12}(n)+\sum_{k\in \Z\setminus \{0\}} (-1)^{k+1} g_{6}(n,k)\cdot p(n-\omega(k))\right).
$$
Thanks to Corollary~\ref{Dim1}, we find that
$$
\Tr_{12}(n)=-\frac{33108590592}{691}\cdot \tau(n).
$$
Therefore, using the fact that $||\Delta(\tau)||\approx  1.035362\cdot 10^{-6},$ 
we have
$$
{D}_{\Delta}:= \sum\limits_{j=0}^{4}\sum\limits_{m=0}^\infty \beta(6,j,m)\cdot D(\Delta;13+2m+2j) 
 = -\frac{33108590592}{691} \cdot ||\Delta(\tau)||=-49.608362\dots.
$$
On the other hand, this value is easily confirmed numerically using the definition
$$
D(\Delta,N;s):=\sum\limits_{n=1}^{N}\frac{\leg{12}{n}\cdot a_f\left(\frac{n^2-1}{24}\right)}{n^s},
$$
and
$$
\widehat{D}_{\Delta}(M,N):=\sum\limits_{j=0}^{4}\sum\limits_{m=0}^{M} \beta(6,j,m)\cdot D(\Delta, N;13+2m+2j).
$$
A short computer computation gives
$\widehat{D}_{\Delta}(100, 2000)=-49.608382\dots.$

\end{example}

\begin{example} Theorem~\ref{theorem2} can be employed to exactly compute normalized weighted sums of twisted Dirichlet series values using only the knowledge of a handful of partition numbers.
For example, we consider the  $\nu=12$ case. Theorem~\ref{theorem2} gives
\begin{equation}\label{expression}
p(n)=\frac{1}{g_{12}(n,0)}\left(\frac{84736491840}{236364091}\sigma_{23}(n)+ \Tr_{24}(n) +\sum_{k\in \Z\setminus \{0\}} (-1)^{k+1} g_{12}(n,k)\cdot p(n-\omega(k))\right).
\end{equation}
The weight 24 Hecke trace $\Tr_{24}(n)$ is computed using the two normalized Hecke eigenforms
\begin{displaymath}
\begin{split}
  f_1=\sum_{n=1}^{\infty}a_{f_1}(n)q^n&:=\Delta(\tau)E_4(\tau)^3+(-156-12\sqrt{144169})\Delta(\tau)^2=q+(540-12\sqrt{144169})q^2+\dots,\\
  f_2=\sum_{n=1}^{\infty}a_{f_2}(n)q^n&:=\Delta(\tau)E_4(\tau)^3+(-156+12\sqrt{144169})\Delta(\tau)^2=q+(540+12\sqrt{144169})q^2+\dots.
\end{split}
\end{displaymath}
Therefore, we have that
$$
\Tr_{24}(n):=  a_{f_1}(n)\cdot \frac{D_{f_1}}{||f_1||} +  a_{f_2}(n)\cdot \frac{D_{f_2}}{||f_2||},
$$
which in turn means that
 $T_{24}(\tau):=\sum_{n=1}^{\infty}\Tr_{24}(n)q^n\in S_{24}.$ Using
  the $n=1$ and $n=2$ cases of (\ref{expression}), with $p(1)=1$ and $p(2)=2,$ we obtain
$$
T_{24}(\tau)= -\frac{11762326506193377107116032}{236364091}q- \frac{22599437869751987230702829568}{236364091}q^2+\dots.
$$
Using the first two coefficients $f_1$ and $f_2$, we obtain two linear equations, whose solution are exactly
$$
\frac{D_{f_1}}{||f_1||} = {\color{black}\frac{-5881163253096688553558016}{236364091} + \frac{{\color{black} 676990898183648483035840512}}{236364091\sqrt{144169}}}$$
and
$$
\frac{D_{f_2}}{||f_2||}= {\color{black}\frac{-5881163253096688553558016}{236364091} - \frac{{\color{black} 676990898183648483035840512}}{236364091\sqrt{144169}}}.
$$
\end{example}

To obtain the results in this paper,
we make use of the theory of Rankin--Cohen bracket operators, which in turn implies that each $P_{\nu}(\tau)$ is a weight $2\nu$ holomorphic modular form (i.e. Theorem~\ref{theorem1}).  Corollaries~\ref{NoCuspForms} and ~\ref{Dim1} follow from straightforward calculations as $\dim_{\C}(S_{2\nu})\in \{0, 1\}.$
The proof of Theorem~\ref{theorem2} is far more involved, and it requires the explicit calculation of the Petersson inner product of $P_{\nu}(\tau)$ with the Eisenstein series $E_{2\nu}(\tau)$  and with each Hecke eigenform in $S_{2\nu}$. To carry out these calculations, we use the method of ``unfolding'', which applies because we are able to describe the generating  function for $p(n)$ as a Poincar\'e series. With such inner products, we are then able to decompose $P_{\nu}(\tau)$ as a sum of a multiple of $E_{2\nu}(\tau)$ with Hecke traces of an infinite sum of values of  twisted quadratic Dirichlet series.

This paper is organized as follows. We prove Theorem~\ref{theorem1} and Corollaries~\ref{NoCuspForms} and~\ref{Dim1} in Section~2. Although Theorem~\ref{theorem1} is not new, for completeness, we offer a simple and short proof based on an argument of Zagier, which relies only on classical identities of Ramanujan. In Section~3, which includes most of the work in this paper, we recall and derive new facts about relevant Poincar\'e series, the method of unfolding, Petersson inner products, and special twisted quadratic Dirichlet series. Finally, in Section~4 we use these results to prove Theorem~\ref{theorem2}.

\section*{Acknowledgements}
\noindent The authors thank the referees for their helpful comments on an earlier version of this paper.
 The second author is grateful for the support of the Thomas Jefferson Fund and the NSF
(DMS-2002265 and DMS-2055118). The fourth author is grateful for the support of a Fulbright Nehru Postdoctoral Fellowship.

\section{Proof of Theorem~\ref{theorem1} and Corollaries~\ref{NoCuspForms} and \ref{Dim1}}

Here we give an elementary proof of Theorem~\ref{theorem1}, following an argument of Zagier, which relies on classical identities of Ramanujan. Then we apply these results to obtain Corollaries~\ref{NoCuspForms} and \ref{Dim1}.

\subsection{Proof of Theorem~\ref{theorem1}}

To prove Theorem \ref{theorem1}, we need three lemmas. The first lemma recalls some of Ramanujan's famous differential formulas involving $E_2(\tau), E_4(\tau)$ and $E_6(\tau)$.
\begin{lemma}\cite[p. 181]{Ramanujan}\label{lem1}
 We have that
\begin{align*}
&D(E_2(\tau))=\frac{E_2(\tau)^2-E_4(\tau)}{12},\\
& D(E_4(\tau))=\frac{E_2(\tau)E_4(\tau)-E_6(\tau)}{3},\\
&D(E_6(\tau))=\frac{E_2(\tau)E_6(\tau)-E_4(\tau)^2}{3}.
\end{align*}
\end{lemma}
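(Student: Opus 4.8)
These three identities are Ramanujan's classical differential equations for the Eisenstein series, so rather than reproducing the original $q$-series manipulations I would give a short structural proof via the \emph{Serre derivative} and dimension counts. Recall the expansions $E_2=1-24\sum_{n\ge1}\sigma_1(n)q^n$, $E_4=1+240\sum_{n\ge1}\sigma_3(n)q^n$, $E_6=1-504\sum_{n\ge1}\sigma_5(n)q^n$; that $E_4,E_6$ are holomorphic modular forms of weights $4,6$ on $\SL_2(\Z)$; and the quasimodular transformation law $E_2(\gamma\tau)=(c\tau+d)^2E_2(\tau)+\tfrac{12}{2\pi i}\,c(c\tau+d)$ for $\gamma\in\SL_2(\Z)$ with bottom row $(c,d)$. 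The first structural fact I would use is that, although $D=q\,d/dq$ does not preserve modularity, the Serre derivative $\vartheta_k(f):=D(f)-\tfrac{k}{12}E_2\,f$ raises the weight by two on holomorphic modular forms: combining the chain rule for the weight-$k$ slash operator with the transformation law above, the two anomalous terms proportional to $c/(c\tau+d)$ cancel, and holomorphy at $\infty$ is visible from the $q$-expansion. I would also recall $\dim M_4=\dim M_6=\dim M_8=1$, spanned by $E_4$, $E_6$, $E_4^2$ respectively, where $M_k$ denotes weight-$k$ holomorphic modular forms on $\SL_2(\Z)$ (the identification $E_8=E_4^2$ itself being forced by matching constant terms).

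For the $E_4$ and $E_6$ formulas I would apply this directly. Since $\vartheta_4(E_4)=D(E_4)-\tfrac13E_2E_4$ lies in $M_6=\C E_6$, it is a scalar multiple of $E_6$, and comparing the coefficient of $q^0$ (where $D(E_4)$ contributes nothing) pins down that scalar; rearranging gives the stated identity for $D(E_4)$. Identically, $\vartheta_6(E_6)=D(E_6)-\tfrac12E_2E_6\in M_8=\C E_4^2$, and comparing the $q^0$-coefficient yields the formula for $D(E_6)$. For $D(E_2)$ one argues slightly differently since $E_2$ is not modular: using the transformation law above and the chain rule applied to $E_2(\gamma\tau)$, one checks that $E_2^2-12\,D(E_2)$ is genuinely modular of weight $4$ — the terms proportional to $c/(c\tau+d)$ and to $c^2/(c\tau+d)^2$ coming from $(E_2(\gamma\tau))^2$ cancel exactly against those coming from $12\,(DE_2)(\gamma\tau)$. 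Being a holomorphic weight-$4$ form it is a multiple of $E_4$, and comparing constant terms ($1$ on each side) forces $E_2^2-12\,D(E_2)=E_4$, the stated identity for $D(E_2)$.

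The only genuinely delicate part is the bookkeeping around the non-modular form $E_2$: verifying the cancellation of anomalous terms in the Serre-derivative claim, and in particular the weight-$4$ modularity of $E_2^2-12\,D(E_2)$, where one must track the linear- and quadratic-in-$c/(c\tau+d)$ corrections to both $(E_2|_2\gamma)^2$ and $(DE_2)|_2\gamma$ and confirm they annihilate one another; the precise constant $\tfrac{12}{2\pi i}$ matters here, and it is easy to misplace a sign or a factor. An alternative that avoids the transformation law of $E_2$ altogether is to take as known the classical facts $D(\log\Delta)=E_2$ (immediate from $\Delta=\eta^{24}=q\prod_{n\ge1}(1-q^n)^{24}$) and $1728\,\Delta=E_4^3-E_6^2$, then differentiate the latter and solve for the derivatives; but this essentially re-derives the same three relations and is less self-contained, so I would favor the Serre-derivative argument above.
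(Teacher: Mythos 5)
Your proof is correct, and it is genuinely more than the paper offers: the paper states this lemma with only a citation to Ramanujan and provides no argument, whereas your Serre-derivative route ($\vartheta_k(f):=D(f)-\tfrac{k}{12}E_2f$ maps weight-$k$ forms to weight-$(k+2)$ forms, then dimension counts and constant-term comparison) is the standard self-contained derivation; your bookkeeping of the anomalous terms in the $E_2$ transformation law is also right, since the terms linear and quadratic in $c/(c\tau+d)$ do cancel in $E_2^2-12\,D(E_2)$ exactly as you describe. There is, however, one payoff of actually carrying the argument through that you should not gloss over: for the third identity your method gives $\vartheta_6(E_6)=D(E_6)-\tfrac12E_2E_6=-\tfrac12E_4^2$ (the constant term of the left-hand side is $-\tfrac12$), i.e.
\[
D(E_6)=\frac{E_2E_6-E_4^2}{2},
\]
with denominator $2$, not $3$. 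This is the correct classical identity --- check the coefficient of $q^1$: $-504=\tfrac12(-528-480)$ --- so the lemma as printed contains a typographical error in the third formula, and your derivation, done carefully, detects it. (The rest of the paper is unaffected, since Lemma~\ref{lem1} is invoked only to conclude that $P_\nu$ is a polynomial in $E_2,E_4,E_6$.)
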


We will also require the modular transformation laws for $E_2(\tau), \eta(\tau),$ and $1/\eta(\tau)$.
\begin{lemma}\label{lem2}
If $\gamma=\begin{bmatrix}
			a  &  b \\
			c  &  d      
		\end{bmatrix}\in \mathrm{SL}_2(\Z),$ then we have 
\begin{align*}
&E_2\left(\gamma(\tau)\right)=(c\tau+d)^2E_2(\tau)+\frac{6c}{\pi i}(c\tau+d), \\
&\eta\left(\gamma(\tau)\right)=\varepsilon(\gamma)(cz+d)^{\frac{1}{2}}\eta(\tau),\\ 
&\frac{1}{\eta\left(\gamma(\tau)\right)}=\overline{\varepsilon}(\gamma)(cz+d)^{\frac{-1}{2}}\frac{1}{\eta(\tau)},
\end{align*}
where the multiplier system $\varepsilon(\cdot)$ is given by
\begin{displaymath}	
	\varepsilon(\gamma):= \begin{cases}  \left(\frac{d}{|c|}\right)\exp\left(\frac{\pi i}{12}(c(a+d-3)-bd(c^2-1))\right)\ \ \ \ \ &{\text {if $c$ is odd,}}\\
	\left(\frac{c}{|d|}\right)\exp\left(\frac{\pi i}{12}(c(a-2d)-bd(c^2-1)+3d-3)\right)\varepsilon(c,d)\ \ \ \ \ \ &{\text  {if $c$ is even}}.
	\end{cases}
\end{displaymath}
Here $\left(\frac{c}{d}\right)$ is the Kronecker--Legendre symbol and 
$$\varepsilon(c,d):= \begin{cases} -1 \ \ \ \ \ &{\text {when $c\leq 0$ and $d<0$}}\\
\  \ 1 \ \ \ \ \ &{\text  {otherwise}}.
 \end{cases}
 $$
\end{lemma}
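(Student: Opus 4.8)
The plan is to treat the three transformation laws in turn, deriving the $\eta$-law from the $E_2$-law and the $1/\eta$-law from the $\eta$-law, since the $E_2$-transformation is what does the real work.

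First I would establish the transformation of $E_2(\tau)$ via its non-holomorphic completion $\widehat{E}_2(\tau):=E_2(\tau)-\frac{3}{\pi\,\mathrm{Im}(\tau)}$. Writing $\SL_2(\Z)=\langle S,T\rangle$ with $S=\begin{bmatrix}0&-1\\1&0\end{bmatrix}$ and $T=\begin{bmatrix}1&1\\0&1\end{bmatrix}$, it suffices to check that $\widehat{E}_2$ is invariant under the weight-$2$ slash action of $S$ and $T$, since that action is a right action of $\SL_2(\Z)$ on functions. Invariance under $T$ is immediate because $E_2$ is a power series in $q$. For $S$ one needs only the classical identity $E_2(-1/\tau)=\tau^2E_2(\tau)+\frac{6\tau}{\pi i}$, which I would obtain from the conditionally convergent weight-$2$ Eisenstein series (summed in Eisenstein's order) together with Hecke's telescoping trick comparing it with $\sum(m\tau+n)^{-1}(m\tau+n+1)^{-1}$; combined with $\mathrm{Im}(-1/\tau)=\mathrm{Im}(\tau)/|\tau|^2$ this gives $\widehat{E}_2(-1/\tau)=\tau^2\widehat{E}_2(\tau)$. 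For a general $\gamma$ one then reads off $E_2(\gamma\tau)=(c\tau+d)^2E_2(\tau)+\tfrac{3}{\pi y}\big(|c\tau+d|^2-(c\tau+d)^2\big)$, and since $|c\tau+d|^2-(c\tau+d)^2=-2icy(c\tau+d)$ with $y=\mathrm{Im}(\tau)$, this collapses to the asserted formula.

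Next, the $\eta$-law. Logarithmic differentiation of $\eta(\tau)=q^{1/24}\prod_n(1-q^n)$ gives $\frac{d}{d\tau}\log\eta(\tau)=\frac{\pi i}{12}E_2(\tau)$. Applying this to $\eta(\gamma\tau)$ and inserting the $E_2$-transformation yields $\frac{d}{d\tau}\log\eta(\gamma\tau)=\frac{d}{d\tau}\log\eta(\tau)+\tfrac12\frac{d}{d\tau}\log(c\tau+d)$, so $\eta(\gamma\tau)=\varepsilon(\gamma)(c\tau+d)^{1/2}\eta(\tau)$ for a constant $\varepsilon(\gamma)$ of modulus $1$ (both sides are nonvanishing holomorphic functions on $\HH$). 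From $\eta(\tau+1)=e^{\pi i/12}\eta(\tau)$ and the group relations one sees $\varepsilon(\gamma)$ is a $24$th root of unity, and evaluating the $\gamma=S$ case at $\tau=i$ recovers $\eta(-1/\tau)=\sqrt{-i\tau}\,\eta(\tau)$. To obtain the explicit closed form I would write a general $\gamma$ as a word in $S$ and $T$ and track the accumulated root of unity through the elementary factors $e^{\pi i/12}$ (from $T$) and $\sqrt{-i\tau}$ (from $S$); the bookkeeping condenses, through the reciprocity law for Dedekind sums, to the stated expression for $\varepsilon(\gamma)$, with the split according to the parity of $c$ and the attendant Kronecker symbols. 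This last step is the main obstacle: it is the one genuinely intricate computation, and rather than reproduce it I would cite the classical treatments (e.g.\ Apostol, \textit{Modular Functions and Dirichlet Series in Number Theory}, Ch.~3, or Knopp's book), since the identity is standard.

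Finally, the $1/\eta$-law is immediate: since $|\varepsilon(\gamma)|=1$ we have $\varepsilon(\gamma)^{-1}=\overline{\varepsilon(\gamma)}$, so reciprocating the $\eta$-identity gives $\frac{1}{\eta(\gamma\tau)}=\overline{\varepsilon(\gamma)}(c\tau+d)^{-1/2}\frac{1}{\eta(\tau)}$, with the reciprocal branch of the square root.
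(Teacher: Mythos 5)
The paper offers no proof of this lemma at all: it is quoted as a classical fact (the transformation law of $E_2$ and the Dedekind $\eta$-multiplier system), so there is no argument in the paper to compare against. Your sketch is the standard derivation and is essentially correct: the weight-$2$ quasi-invariance of $E_2$ via the completion $\widehat{E}_2(\tau)=E_2(\tau)-\tfrac{3}{\pi y}$ checked on the generators $S,T$, the identity $|c\tau+d|^2-(c\tau+d)^2=-2icy(c\tau+d)$ collapsing the error term to $\tfrac{6c}{\pi i}(c\tau+d)$, the logarithmic-derivative identity $\tfrac{d}{d\tau}\log\eta=\tfrac{\pi i}{12}E_2$ forcing $\eta(\gamma\tau)=\varepsilon(\gamma)(c\tau+d)^{1/2}\eta(\tau)$ for a constant, and the reciprocal for $1/\eta$ all check out. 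Two small remarks. First, nonvanishing of both sides only gives $\varepsilon(\gamma)\neq 0$; the cleanest route to $|\varepsilon(\gamma)|=1$ is to note that $\varepsilon(\gamma)^{24}$ is the (trivial) multiplier of $\Delta=\eta^{24}$, which is in effect what you invoke in the next clause, so the logic is fine but the parenthetical justification is misplaced. Second, the genuinely hard content of the lemma is the closed form of $\varepsilon(\gamma)$ with its parity split and Kronecker symbols; you defer this to Apostol/Knopp via Dedekind sums rather than carrying out the bookkeeping. Given that the paper itself states the formula with no derivation, this deferral is entirely in the spirit of the original, but be aware that it is the only part of the lemma that is not routine, and a fully self-contained proof would have to reproduce that computation (or the Dedekind-sum reciprocity it rests on).
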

Finally, as the series $P_\nu(\tau)$ is assembled by differentiating $\eta(z)$ and $1/\eta(\tau)$, we shall require the following useful formulas.

	\begin{lemma} \label{ZagierLemma} If $\nu$ is a nonnegative integer, then the following are true.
	
\noindent
	(1)	We have that
		$$
			\frac{D^\nu(\eta(\gamma(\tau)))}{\nu!\Gamma(\nu+1/2)} =\varepsilon(\gamma) \sum_{m=0}^{\nu} \frac{(2\pi i c)^{\nu - m}(c\tau + d)^{\nu+m+1/2}}{(\nu-m)!} \frac{D^m(\eta(\tau))}{m!\Gamma(m+1/2)}.
$$

\noindent
(2) We have that
$$
			\frac{D^\nu\left(\frac{1}{\eta(\gamma(\tau))}\right)}{\nu!\Gamma(\nu-1/2)} =\overline{\varepsilon}(\gamma) \sum_{m=0}^{\nu} \frac{(2\pi i c)^{\nu - m}(c\tau + d)^{\nu+m-1/2}}{(\nu-m)!} \frac{D^m\left(\frac{1}{\eta(\tau)}\right)}{m!\Gamma(m-1/2)}.
$$

	\end{lemma}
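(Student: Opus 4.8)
The plan is to deduce both identities from a single fact about how the iterated $D$-derivatives of a function with a weight-$\kappa$ modular transformation law behave under $\SL_2(\Z)$, which I would prove by a short induction on $\nu$. Suppose $h$ is holomorphic on $\HH$ and $h(\gamma\tau)=\chi(\gamma)(c\tau+d)^{\kappa}h(\tau)$ for every $\gamma\in\SL_2(\Z)$ with bottom row $(c,d)$, where $\kappa$ is fixed and the multiplier $\chi(\gamma)$ does not depend on $\tau$. I would first establish that for each $\nu\geq 0$, $(D^{\nu}h)(\gamma\tau)$ equals $\chi(\gamma)$ times an explicit lower-triangular combination of the functions $(c\tau+d)^{\nu+m+\kappa}(D^{m}h)(\tau)$, $0\leq m\leq\nu$, with coefficients that do not involve $\tau$ (products of a power of $c/(2\pi i)$ with ratios of factorials and $\Gamma$-values). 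Taking $(h,\chi,\kappa)=(\eta,\varepsilon,\tfrac12)$ then yields part (1), and $(h,\chi,\kappa)=(1/\eta,\overline\varepsilon,-\tfrac12)$ yields part (2); only the $\nu=0$ transformation laws of Lemma~\ref{lem2} are used, not the explicit shape of the multiplier.

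The base case $\nu=0$ is exactly the transformation law of $h$. For the inductive step the crucial point is the chain rule identity $D_{\tau}\bigl[(D^{\nu}h)(\gamma\tau)\bigr]=(c\tau+d)^{-2}(D^{\nu+1}h)(\gamma\tau)$ (which uses only $\det\gamma=1$): the $\nu+1$ case follows from the $\nu$ case upon multiplying through by $(c\tau+d)^{2}$ and applying $D_{\tau}$, and no auxiliary relation among the $D^{m}h$ is required since $D_{\tau}(D^{m}h)=D^{m+1}h$ merely raises the index. Carrying this out, one differentiates each summand by the Leibniz rule (the only input being $D_{\tau}(c\tau+d)^{s}=\tfrac{sc}{2\pi i}(c\tau+d)^{s-1}$), reindexes the two resulting sums so that both carry the new derivative index $m$, and matches the coefficient of $(c\tau+d)^{\nu+1+m+\kappa}(D^{m}h)(\tau)$ on the two sides. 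After cancelling the common factorial and $\Gamma$-factors, the comparison collapses to the elementary polynomial identity
\[
(\nu+m+\kappa)(\nu-m+1)+m(m+\kappa-1)=(\nu+1)(\nu+\kappa),
\]
which holds for all $\nu$ and $m$ by direct expansion (the boundary values $m=0$ and $m=\nu+1$ are covered by the same identity, the spurious summand simply dropping out). This closes the induction and hence establishes (1) and (2).

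An equivalent and slightly more conceptual organization — the one closer to Zagier's — packages the $D^{\nu}h$ into the Cohen--Kuznetsov series $\widehat{h}(\tau,X):=\sum_{\nu\geq 0}\frac{D^{\nu}h(\tau)}{\nu!\,\Gamma(\nu+\kappa)}X^{\nu}$. Summing the desired identity against $X^{\nu}$, reindexing, and substituting $X\mapsto X/(c\tau+d)^{2}$ shows that it is equivalent to the ``Jacobi-like form'' transformation law
\[
\widehat{h}\!\left(\gamma\tau,\frac{X}{(c\tau+d)^{2}}\right)=\chi(\gamma)\,(c\tau+d)^{\kappa}\exp\!\left(\frac{cX}{2\pi i\,(c\tau+d)}\right)\widehat{h}(\tau,X),
\]
whose $X^{0}$-coefficient is the $\nu=0$ law and whose higher coefficients are generated from it by the same one-derivative step used in the induction. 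Either way, I do not anticipate a genuine obstacle: the content is essentially organizational bookkeeping, the one mild wrinkle being that the half-integral weights force $\Gamma$-values to replace the usual factorials; the single substantive computation, the displayed polynomial identity, is immediate.
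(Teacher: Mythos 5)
Your proof is correct and follows essentially the same route as the paper's: an induction on $\nu$ whose base case is Lemma~\ref{lem2} and whose inductive step applies $(c\tau+d)^2D_\tau$ to the case-$\nu$ identity, the only differences being that you treat parts (1) and (2) uniformly via a weight parameter $\kappa$ and reduce the bookkeeping to the (correct) identity $(\nu+m+\kappa)(\nu-m+1)+m(m+\kappa-1)=(\nu+1)(\nu+\kappa)$, where the paper instead says ``after straightforward algebraic manipulation.'' One point worth flagging: your computation produces the constant $(c/2\pi i)^{\nu-m}$ rather than the $(2\pi i c)^{\nu-m}$ printed in the lemma; your normalization is the one forced by $D_\tau(c\tau+d)^{s}=\tfrac{sc}{2\pi i}(c\tau+d)^{s-1}$ (and by a direct check at $\nu=1$ against Lemma~\ref{lem2}), so this is a typo in the paper's statement rather than a gap in your argument, and it is harmless for the application in Theorem~\ref{theorem1} since the exponential factors coming from $\tilde f$ and $\tilde g$ still cancel in the product.
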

	
	\begin{proof}
		We prove (1)  by induction on $\nu$, and we note that the proof of (2) follows similarly.  The cases where $\nu \in \{ 0, 1\}$ follow by Lemma \ref{lem2}. For the induction step, let 
		\begin{align*}
			\frac{D^\nu(\eta(\gamma(\tau)))}{\nu!\Gamma(\nu+1/2)} &= \varepsilon(\gamma)\sum_{m=0}^{\nu} \frac{(2\pi i c)^{\nu - m}(c\tau + d)^{\nu+m+1/2}}{(\nu-m)!} \frac{D^m(\eta(\tau))}{m!\Gamma(m+1/2)}.
		\end{align*}
		Differentiating above both sides with respect to $\tau$ to get  
		\begin{align*}
			\frac{D^{\nu+1}(\eta(\gamma(\tau)))}{\nu!\Gamma(\nu+1/2)} &= \varepsilon(\gamma)\sum_{m=0}^{\nu} \frac{(2\pi i c)^{\nu - m+1}(\nu+m+1/2)(c\tau + d)^{\nu+m+3/2}}{(\nu-m)!} \frac{D^m(\eta(\tau))}{m!\Gamma(m+1/2)}\\
			&+\varepsilon(\gamma)\sum_{m=0}^{\nu} \frac{(2\pi i c)^{\nu - m}(c\tau + d)^{\nu+m+5/2}}{(n-m)!} \frac{D^{m+1}(\eta(\tau))}{m!\Gamma(m+1/2)}.
		\end{align*}
		After straightforward algebraic manipulation, we obtain
		\begin{align*}
			\frac{D^{\nu+1}(\eta(\gamma(\tau)))}{\nu!\Gamma(\nu+1/2)} &=  \varepsilon(\gamma)\frac{(2\pi i c)^{\nu+1}(\nu+1/2)(c\tau + d)^{\nu+3/2}}{\nu!\Gamma(1/2)} f(\tau)+\varepsilon(\gamma)\frac{(c\tau + d)^{2\nu+5/2}}{\nu!\Gamma(\nu+1/2)} D^{\nu+1}(\eta(\tau))\\
			&+\varepsilon(\gamma)(\nu+1)(\nu+1/2)\sum_{i=1}^{\nu}\frac{(2\pi i c)^{\nu - i+1}(c\tau + d)^{\nu+i+3/2}}{(\nu-i+1)!} \frac{D^i(\eta(\tau))}{i!\Gamma(i+1/2)}.
		\end{align*}
		The claim follows by multiplying both sides by $\frac{1}{(\nu+1)(\nu+1/2)}$.  
	\end{proof}

	\begin{proof}[Proof of Theorem~\ref{theorem1}] 
		We consider the formal power series (for example, see page 58 of Zagier \cite{Zagier})
		\begin{align*}
			\tilde{f}(\tau,X) &:= \sum_{\nu=0}^{\infty} \frac{D^\nu(\eta(\tau))}{\nu!\Gamma(\nu+1/2)}(2\pi i X)^\nu, \\
			\tilde{g}(\tau,X) &:= \sum_{n=0}^{\infty} \frac{D^\nu\left(\frac{1}{\eta(\tau)}\right)}{\nu!\Gamma(\nu-1/2)}(2\pi i X)^\nu.
		\end{align*}
		A straightforward calculation reveals that
		\begin{align*}
			\tilde{f}(\tau,-X)\tilde{g}(\tau,X) = \sum_{\nu=0}^{\infty} \frac{P_\nu(\tau)}{\Gamma(\nu+1/2)\Gamma(\nu-1/2)} (2\pi i X)^\nu.
		\end{align*}
		Applying Lemma \ref{ZagierLemma} to the $\nu$th coefficients of $\tilde{f}$ and $\tilde{g}$, then using the Taylor expansion for $e^X$, we obtain the following transformation laws for $\gamma \in \mathrm{SL}_2(\Z)$:
		\begin{align*}
			\tilde{f}\left(\gamma(\tau), \frac{X}{(c\tau + d)^2}\right) &= \varepsilon(\gamma)(c\tau + d)^{1/2} e^{-cX/(c\tau + d)} \tilde{f}(\tau,X), \\
			\tilde{g}\left(\gamma(\tau),\frac{X}{(c\tau + d)^2}\right) &= \overline{\varepsilon}(\gamma)(c\tau + d)^{-1/2} e^{cX/(c\tau + d)} \tilde{g}(\tau,X).
		\end{align*}
		This in turn shows that $P_\nu(\tau)$ is a holomorphic modular form of weight $2\nu$ for all $\nu \geq 0$.  In particular, thanks to Lemma~\ref{lem1} and the identities
$$
D(\eta(\tau))=\frac{1}{24}E_2(\tau)\eta(\tau)~~\text{and}~~D\left(\frac{1}{\eta(\tau)}\right)=-\frac{1}{24}E_2(\tau)\frac{1}{\eta(\tau)},
$$		
it follows that $P_\nu(\tau)$ is a polynomial in $E_2(\tau)$, $E_4(\tau)$, and $E_6(\tau),$ which in turn must be a polynomial in $E_4(\tau)$ and $E_6(\tau)$ as they generate all holomorphic modular forms on $\SL_2(\Z)$ 
(for example, see Theorem 1.23 of \cite{CBMS}).
\end{proof}

\subsection{Proof of Corollaries~\ref{NoCuspForms} and \ref{Dim1}}
For $\nu\in \{2, 3, 4, 5, 7\},$ we have that $S_{2\nu}=\{0\}.$ Therefore, it follows that $P_{\nu}(\tau)$ is a constant multiple of the Eisenstein series $E_{2\nu}(\tau)$.  For Corollary~\ref{Dim1}, we have that $S_{2\nu}=\C \Delta_{2\nu}(\tau)$ (see (\ref{Dim1CuspForm}), and so $P_{\nu}(\tau)$ is a linear combination of $E_{2\nu}(\tau)$ and $\Delta_{2\nu}(\tau)$. By explicitly computing these expressions, these two corollaries follow from (\ref{series_side}) by comparing coefficients of $q^n.$

\section{Petersson inner products and twisted  quadratic Dirichlet series}

Theorem~\ref{theorem1} asserts that each $P_{\nu}(\tau)$ is a weight  $2\nu$ holomorphic modular form on $\SL_2(\Z)$. As it is straightforward to compute the constant terms of these forms, which in turn determines the Eisenstein series pieces,  the main objective underlying the proof of Theorem~\ref{theorem2} is the explicit determination of the cuspidal pieces. This task is rather involved. The first step is the determination of a new representation of $1/\eta(\tau),$ the generating function for $p(n),$ which we offer as a Poincar\'e series. We carry out all of the subsequent calculations related to $P_{\nu}(\tau)$ with this knowledge. The remaining steps lead up to and culminate with the method of ``unfolding,'' which uses the theory of Petersson inner products to express these cuspidal components in terms of Hecke eigenvalues and the desired infinite sums of values of twisted  quadratic Dirichlet series. In this section we offer these steps in detail.

\subsection{Harmonic Maass form Poincar\'e series} 

The main observation that allows us to prove Theorem~\ref{theorem2} is the realization that the weight $-1/2$ modular form $1/\eta(\tau)$ has a convenient description as a Poincar\'e series. Here we recall the construction of negative weight harmonic Maass form Poincar\'e series as offered in the work of Bringmann and the second author \cite{BringmannOnoPoincare}. We then specialize this work to the case of the weakly holomorphic modular form $1/\eta(\tau)$.

We first recall notation from the theory of modular and harmonic Maass forms (see Chapter 4 of \cite{HMFBook}), which we employ to carry out this construction. Let $\leg{c}{d}$ be the extended Legendre symbol and let $\sqrt{\cdot}$ be the principal branch of the holomorphic square root, and for odd integers $d$ set
$$
\varepsilon_d := \begin{cases}
	1 &\ \ \ \text{if }d\equiv1\pmod{4}, \\
	i &\ \ \ \text{if }d\equiv3\pmod{4}.
\end{cases}
$$
Furthermore, let $\varepsilon(\cdot)$ be a multiplier system on $\SL_2(\Z).$ If $\HH$ is the upper half of the complex plane, $k\in\frac{1}{2}\Z,$  and $f:\HH\to\C$ is a smooth function, then for all $\gamma=\begin{pmatrix}
	a & b \\
	c & d
\end{pmatrix}\in\SL_2(\Z),$ we define
$$
(f|_{k,\varepsilon}\gamma)(\tau):=\begin{cases}
	(c\tau+d)^{-k}\varepsilon(\gamma)^{-1}f\left(\frac{a\tau+b}{c\tau+d}\right) &\ \ \ \text{if }k\in\Z, \\
	\leg{c}{d}\varepsilon_d^{2k}(c\tau+d)^{-k}\varepsilon(\gamma)^{-1}f\left(\frac{a\tau+b}{c\tau+d}\right)&\ \ \ \text{if }k\in\frac{1}{2}+\Z.
\end{cases}
$$

We consider negative weight $k$ harmonic Maass  Poincar\'e series, which  are obtained by averaging certain modified Whittaker functions over the action of modular transformations modulo translations. To make this precise, for $s\in\C$ and $y\in\RR\setminus\{0\},$ let
$$
\mathcal{M}_s(y):=|y|^{-\frac{k}{2}}M_{\sign(y)\frac{k}{2},s-\frac{1}{2}}(|y|),
$$
where $M_{\nu,\mu}$ is the usual $M$-Whittaker function. If $\tau=x+iy,$ we let
\begin{equation}\label{PoincareComponent}
\phi_s(\tau):=\mathcal{M}_s(4\pi y)e(x),
\end{equation}
where $e(x):=e^{2\pi i x}.$ Finally, if $\Gamma$ is a finite-index subgroup of $\SL_2(\Z)$ and $\rho:=M_\rho^{-1}\infty$ is a cusp of $\Gamma$, let $\Gamma_\rho$ be the stabilizer of $\rho$ in $\Gamma,$ and we let $\widehat{\Gamma}$ and $\widehat{\Gamma}_\rho$ be the uniformizations of $\Gamma$ and $\Gamma_\rho$ respectively. 
In this notation, if $s\in\C$ and $\Re(s)>1,$ then we define the Poincar\'e series
$$
\mathcal{P}_{\rho}(\tau,m,\Gamma,k,s,\varepsilon):=\frac{1}{\Gamma(2-k)}\sum\limits_{M\in\widehat{\Gamma}_\rho\backslash\widehat{\Gamma}}\phi_s\left(\left(\frac{-m+\kappa}{t}\right) M_\rho\tau\right)\Bigg|_{k,\varepsilon}M,
$$ 
where $t$ and $\kappa$ are the cusp width parameter (see Section 3.3 of \cite{RankinBook}) respectively, and  $\Gamma(\cdot)$ is the usual Gamma-function. At special values of $s,$ these Poincar\'e series are harmonic Maass forms (see Definition 4.2 of \cite{BringmannOnoPoincare}).

\begin{lemma}[Lemma 3.1 of \cite{BringmannOnoPoincare}]
	If $k<0,$ then the Poincar\'e series $\mathcal{P}_{\rho}(\tau,m,\Gamma,k,\frac{2-k}{2},\varepsilon)$ is uniformly and absolutely convergent and defines a harmonic Maass form on $\Gamma$ of weight $k$ and multiplier $\varepsilon.$
\end{lemma}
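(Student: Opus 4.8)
The plan is to verify directly the three defining conditions of a weight $k$ harmonic Maass form for the series $\mathcal{P}_{\rho}(\tau,m,\Gamma,k,\tfrac{2-k}{2},\varepsilon)$: absolute and locally uniform convergence of the average over $\widehat{\Gamma}_\rho\backslash\widehat{\Gamma}$, the transformation law $\mathcal{P}|_{k,\varepsilon}\gamma=\mathcal{P}$ for $\gamma\in\Gamma$, and annihilation by the weight $k$ hyperbolic Laplacian $\Delta_k$ together with moderate growth at every cusp of $\Gamma$. The only genuinely analytic input is the behaviour of the $M$-Whittaker function: $M_{\mu,\nu}(u)\sim u^{\nu+\frac12}$ as $u\to 0^{+}$ and $M_{\mu,\nu}(u)=O\bigl(e^{u/2}u^{-\mu}\bigr)$ as $u\to+\infty$. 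Writing $\sigma:=\Re(s)$, this translates into $\mathcal{M}_s(4\pi y)=O\bigl(|y|^{\sigma-k/2}\bigr)$ as $|y|\to 0$ and $\mathcal{M}_s(4\pi y)=O\bigl(e^{2\pi|y|}\bigr)$ as $|y|\to\infty$, which are the two estimates that everything else rests on.

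For the convergence I would fix $\tau=x+iy$ in a compact subset of $\HH$ and sum over representatives $M=\begin{pmatrix}a&b\\c&d\end{pmatrix}$ of $\widehat{\Gamma}_\rho\backslash\widehat{\Gamma}$, writing $(c',d')$ for the bottom row of $M_\rho M$, so that in the $M$-th term the argument of $\phi_s$ has imaginary part $\tfrac{-m+\kappa}{t}\cdot y/|c'\tau+d'|^{2}$. Only finitely many cosets have $|c'\tau+d'|\le 1$, and those contribute boundedly since $\tau$ ranges over a compact set; for all remaining cosets the small-argument estimate above, combined with $\bigl|(c\tau+d)^{-k}\bigr|=|c\tau+d|^{-k}$ (using $k<0$) and $|c\tau+d|\asymp|c'\tau+d'|$, bounds the $M$-th term by a constant times $y^{\,\sigma-k/2}\,|c'\tau+d'|^{-2\sigma}$. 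The classical Eisenstein estimate gives $\sum_{(c',d')}|c'\tau+d'|^{-2\sigma}<\infty$ whenever $\sigma>1$, and at the special value $s=\tfrac{2-k}{2}$ one has $\sigma=1-\tfrac{k}{2}>1$ \emph{precisely because} $k<0$. Hence the series converges absolutely and uniformly on compacta, so it defines a continuous function of $\tau$ (smoothness will come with harmonicity).

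The transformation law is then formal: for $\gamma\in\Gamma$, absolute convergence lets us reindex $M\mapsto M\gamma^{-1}$ in $\mathcal{P}|_{k,\varepsilon}\gamma=\sum_{M}\bigl(\phi_s(\cdots)|_{k,\varepsilon}M\bigr)|_{k,\varepsilon}\gamma$, and the cocycle relations for the multiplier $\varepsilon$ (including the half-integral Legendre/$\varepsilon_d$ factors) recover $\mathcal{P}$; here the left-invariance of each summand under $\widehat{\Gamma}_\rho$, which makes the coset sum well defined in the first place, is exactly what the width $t$ and the parameter $\kappa$ are chosen to guarantee. For harmonicity, the $M$-Whittaker equation shows $\phi_s$ is a $\Delta_k$-eigenfunction with eigenvalue $\bigl(s-\tfrac{k}{2}\bigr)\bigl(1-s-\tfrac{k}{2}\bigr)$, which vanishes at $s=\tfrac{2-k}{2}$; since $\Delta_k$ commutes with the slash action and with the symmetries $\tau\mapsto\lambda M_\rho\tau$ appearing in the construction, every term is $\Delta_k$-harmonic, and because $\Delta_k$ is elliptic the locally uniformly convergent series may be differentiated term by term (Weyl's lemma), giving $\Delta_k\mathcal{P}=0$ and real-analyticity. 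Finally, inserting the two Whittaker asymptotics into the Fourier expansion of $\mathcal{P}$ at each cusp yields the required moderate growth — at most linear exponential growth coming from the principal part at $\rho$, polynomial growth at the other cusps.

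The step I expect to be the real obstacle is the convergence at the boundary value $s=\tfrac{2-k}{2}$: the summand cannot be bounded uniformly because $M_{\mu,\nu}$ blows up exponentially for large argument, so one must first peel off the finitely many ``large'' cosets, and the resulting exponent $2\sigma$ lands exactly at the edge of the range of convergence, so the hypothesis $k<0$ is used in an essential way rather than cosmetically. A secondary but real nuisance is propagating the half-integral-weight automorphy factor $\leg{c}{d}\varepsilon_d^{2k}$ and the cusp parameters $t,\kappa$ cleanly through the reindexing so that both the well-definedness of the coset sum and the transformation law hold on the nose.
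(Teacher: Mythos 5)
The paper offers no proof of this lemma at all: it is imported verbatim as Lemma 3.1 of the cited Bringmann--Ono reference, and your argument is precisely the standard proof underlying that citation --- the small-argument Whittaker bound $\mathcal{M}_s(4\pi y)=O\bigl(|y|^{\sigma-k/2}\bigr)$ reduces convergence to the Eisenstein-type sum $\sum|c'\tau+d'|^{-2\sigma}$ with $\sigma=1-\tfrac{k}{2}>1$ forced by $k<0$, the eigenvalue $(s-\tfrac{k}{2})(1-s-\tfrac{k}{2})$ of $\phi_s$ under $\Delta_k$ vanishes exactly at $s=\tfrac{2-k}{2}$, and the transformation law and growth conditions follow by reindexing and elliptic regularity. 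Your sketch is correct and complete in all essentials (including the correct identification of where $k<0$ is used), so it supplies a proof where the paper supplies only a reference.
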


These Poincar\'e series have a Fourier expansion that are generally more complicated than those of usual holomorphic modular forms (see Lemma 4.3 of \cite{HMFBook}). More precisely, if $f(\tau)$ is a harmonic Maass form of weight $k$ and multiplier $\varepsilon$ on $\Gamma,$ then it admits a Fourier expansion of the form 
$$
f(\tau)= \sum\limits_{n\gg-\infty}a_f^{+}(n)q^n+\sum\limits_{\substack{n\ll\infty \\ n\neq 0}}a_f^{-}(n)\Gamma(1-k,-4\pi n y)q^n,
$$
where $\Gamma(s,x)$ is the incomplete Gamma function. This implies that $f(\tau)$ decomposes into two pieces, its {\it holomorphic part}
$$
f^+(\tau):=\sum\limits_{n\gg-\infty}a_f^{+}(n)q^n
$$
and its {\it non-holomorphic part}
$$
f^-(\tau):=+\sum\limits_{\substack{n\ll\infty \\ n\neq 0}}a_f^{-}(n)\Gamma(1-k,-4\pi n y)q^n.
$$
Furthermore, it is important for our purposes to note that a harmonic Maass form with vanishing non-holomorphic part is a weakly holomorphic modular form. It is in this way that we will determine a Poincar\'e series expression for $1/\eta(\tau).$ Namely, we will produce a harmonic Maass Poincar\'e series, with vanishing non-holomorphic part, which then must equal $1/\eta(\tau).$

To this end, we consider the case where $\Gamma:=\SL_2(\Z), \rho:=\infty,$ and $M_\rho:=\begin{pmatrix}
	1 & 0 \\
	0 & 1
\end{pmatrix}.$ To describe the Fourier expansion of the  Poincar\'e series, we recall the generalized Kloosterman sums (see the proof of Theorem 3.2 of \cite{BringmannOnoPoincare}). If $c>0,n\geq 0,$ and $m\in\Z,$ let
$$
K_c(\varepsilon,m,n):=\sum\limits_{S}\frac{1}{\varepsilon(S)}\exp\left(\frac{2\pi i}{c}\frac{(m+\kappa)a+(n+\kappa)d}{t}\right),
$$
where the sum runs over all matrices $S=\begin{pmatrix}
	a & b \\
	c & d
\end{pmatrix}\in\SL_2(\Z)$ for which
$$
0\leq d<ct\ \ \ \text{and}\ \ \ 0\leq a<ct.
$$

\begin{proposition}
Assuming the notation and hypotheses above, the holomorphic part $\mathcal{P}^+(\tau,m,k,\varepsilon)$ of  $\mathcal{P}_\infty(\tau,m,\SL_2(\Z),k,1-k/2,\varepsilon)$ is given by
\begin{equation}\label{HolomorphicPartPoincare}
\mathcal{P}^+(\tau,m,k,\varepsilon)= q^{\frac{-m+\kappa}{t}}+a^+(0)+\sum\limits_{n\geq 0}a^+(n)q^{\frac{n+\kappa}{t}},
\end{equation}
where  $a^+(0)=0$ if $\kappa\neq 0,$ and where for all $n>0$ we have
\begin{equation}\label{Kloostermania}
a^+(n) = -i^{2-k}\cdot(2\pi)\left|\frac{(-m+\kappa)}{n+\kappa}\right|^\frac{1-k}{2}\cdot\frac{1}{t}
\sum\limits_{c>0}\frac{K_c(\varepsilon,-m,n)}{c}\cdot I_{1-k}\left(\frac{4\pi}{ct}\sqrt{|-m+\kappa||n+\kappa|}\right).
\end{equation}
\end{proposition}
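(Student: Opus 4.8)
The plan is to derive \eqref{HolomorphicPartPoincare} and \eqref{Kloostermania} by computing the Fourier expansion of $\mathcal{P}_\infty(\tau,m,\SL_2(\Z),k,1-k/2,\varepsilon)$ directly from its defining average over $\widehat{\Gamma}_\infty\backslash\widehat{\Gamma}$, following the proof of Theorem 3.2 of \cite{BringmannOnoPoincare} specialized to $\Gamma=\SL_2(\Z)$ and the cusp $\rho=\infty$. First I would split the sum according to the lower-left entry $c$ of a representative $M=\begin{pmatrix} a&b\\c&d\end{pmatrix}$. The unique coset with $c=0$ contributes the seed itself, and the whole point of the special value $s=1-k/2$ is that this normalized seed has holomorphic part exactly $q^{(-m+\kappa)/t}$; this coset therefore contributes precisely the principal term in \eqref{HolomorphicPartPoincare}, and since it produces no $q^{(n+\kappa)/t}$ with $n+\kappa=0$ unless $\kappa=0$, it contributes nothing to $a^+(0)$ when $\kappa\neq 0$.

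The main work is the sum over $c>0$. Here I would use the standard two-sided coset parametrization: modulo the right action of $\widehat{\Gamma}_\infty$, the representatives with fixed $c$ are indexed by pairs $(a,d)$ with $0\le a,d<ct$, $\gcd(c,d)=1$, and $ad\equiv1\pmod c$, while summing over the residual right $\widehat{\Gamma}_\infty$-translations replaces, for each such pair, the single term by a sum over $n\in\Z$ of the corresponding expression evaluated at $\tau+nt$, to which the Lipschitz/Poisson summation formula applies. Carrying this out term by term, the $q^{(n+\kappa)/t}$-coefficient acquires the exponential sum $\sum_{(a,d)}\varepsilon(M)^{-1}e\!\left(\tfrac{1}{c}\tfrac{(-m+\kappa)a+(n+\kappa)d}{t}\right)$, which is exactly $K_c(\varepsilon,-m,n)$ once the half-integral-weight factor $\leg{c}{d}\varepsilon_d^{2k}$ from the slash action is folded into the multiplier, multiplied by the Fourier-transform integral of $\mathcal{M}_s$. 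That integral is a classical Laplace-type transform of $M_{k/2,1/2-k/2}$; in the sign regime $n+\kappa>0$ it evaluates to $I_{1-k}\!\left(\tfrac{4\pi}{ct}\sqrt{|{-m+\kappa}|\,|n+\kappa|}\right)$, and the accumulated normalizations from the change of variables and the integral representation produce the prefactor $-i^{2-k}(2\pi)/t$ and the ratio $\left|(-m+\kappa)/(n+\kappa)\right|^{(1-k)/2}$, while the complementary regime $n+\kappa<0$ yields the incomplete-Gamma terms that make up the non-holomorphic part.

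Putting the pieces together gives the full Fourier expansion of the Poincaré series as the principal term plus the holomorphic series $\sum_{n>0}a^+(n)q^{(n+\kappa)/t}$ with $a^+(n)$ as in \eqref{Kloostermania}, plus the non-holomorphic incomplete-Gamma series; discarding the latter is exactly the projection onto $\mathcal{P}^+$, which yields \eqref{HolomorphicPartPoincare}. The interchange of summation and integration needed throughout is justified by the uniform absolute convergence supplied by Lemma 3.1 of \cite{BringmannOnoPoincare}. I expect the main obstacle to be purely one of bookkeeping: correctly tracking the multiplier $\varepsilon$ together with the half-integral factor $\leg{c}{d}\varepsilon_d^{2k}$ through the double-coset manipulation so that the exponential sum collapses to $K_c(\varepsilon,-m,n)$ with exactly the stated ranges $0\le a,d<ct$, and pinning down the exact constant and the power of $i$ in \eqref{Kloostermania}. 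No genuinely new analytic ingredient is required beyond the explicit Whittaker integral, which is standard.
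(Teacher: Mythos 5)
Your outline is correct and follows essentially the same route as the paper, which offers no independent proof of this proposition but instead recalls it as a specialization of the Fourier expansion computed in the proof of Theorem 3.2 of \cite{BringmannOnoPoincare} --- precisely the coset decomposition, Lipschitz/Poisson summation, and Whittaker integral evaluation you describe. Nothing further is needed beyond the bookkeeping you already flag.
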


\subsection{The generating function for $p(n)$ as a Poincar\'e series}

We now describe $1/\eta(\tau)$ as a Poincar\'e series. If $f(\tau)$ is a harmonic Maass form of weight $k$ whose holomorphic part is
$$
f^+(\tau):=\sum\limits_{n\gg-\infty}a_f^{+}(n)q^n,
$$
then its {\it principal part} at $\infty$ is defined as
$$
P_{f,\infty}(\tau):=\sum\limits_{n<0}a_f^+(n)q^n.
$$
The definition at other cusps is analogous (see Definition 4.2 of \cite{HMFBook}). Our goal is to construct a suitable weight $-1/2$ harmonic Maass Poincar\'e series  that has principal part $q^{-\frac{1}{24}}$, which not only shares the principal part of $1/\eta(\tau)$, but also turns out to be this weakly holomorphic modular form.

To this end, we make use of the fact that under very general conditions, a harmonic Maass form of negative weight is determined by its principal parts. Namely, let $\Gamma$ be a finite-index subgroup of $\SL_2(\Z)$ and let $\varepsilon$ be a multiplier system for $\Gamma$ which is trivial on a congruence subgroup. Under these assumptions, we have the following lemma.

\begin{lemma}\label{PrincipalPartsDetermination}
	Let $k<0$ and $f(\tau)$ a harmonic Maass form of weight $k$ on $\Gamma$ with multiplier system $\varepsilon.$ If the principal parts of $f(\tau)$ at all cusps vanish, then $f(\tau)=0.$
\end{lemma}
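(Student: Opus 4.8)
The plan is to run the standard maximum-modulus / valence-type argument for negative weight harmonic Maass forms, adapted to the half-integral weight setting. Suppose $f(\tau)$ is a harmonic Maass form of weight $k<0$ on $\Gamma$ with multiplier $\varepsilon$, all of whose principal parts (at every cusp) vanish. First I would recall that a harmonic Maass form with vanishing principal parts is \emph{bounded} on the modular curve: at each cusp, the Fourier expansion of $f$ has no terms $q^{n}$ with $n<0$ in the holomorphic part, and the non-holomorphic part $\sum_{n\ll\infty, n\neq 0}a_f^-(n)\Gamma(1-k,-4\pi n y)q^n$ decays, since $\Gamma(1-k,-4\pi n y)$ decays exponentially as $y\to\infty$ for $n<0$ while the $y$-growth $y^{1-k}$ coming from the $n>0$ asymptotics of the incomplete Gamma function is tempered away by $q^n = e^{2\pi i n x}e^{-2\pi n y}$; more carefully, near $\infty$ one has $f(\tau) = O(1)$ as $y\to\infty$ once the principal part is removed, and similarly at all other cusps after applying the appropriate scaling matrix $M_\rho$. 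Hence $|f(\tau)|^2 y^{k}$ — or more robustly $|f(\tau)|$ itself in a fundamental-domain neighborhood of each cusp — is bounded on a neighborhood of the cusps, and since the complement is compact, $f$ is bounded on all of $\HH$.

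Next I would apply the operator $\xi_k := 2iy^k\overline{\partial_{\bar\tau}}$, which sends harmonic Maass forms of weight $k$ to weakly holomorphic modular forms of weight $2-k$ with the conjugate multiplier $\bar\varepsilon$, and which annihilates exactly the holomorphic part. If $f$ has vanishing principal parts, then $\xi_k(f)$ is a \emph{holomorphic} modular form of positive weight $2-k>2$ that is \emph{cuspidal} — its constant term at each cusp is a multiple of the coefficient $a_f^-(0)$, which vanishes because the principal part (indeed the whole non-holomorphic growth) is controlled. In fact the cleanest route: since $2-k\geq 3>0$, the boundedness of $f$ forces $g:=\xi_k(f)\in S_{2-k}(\Gamma,\bar\varepsilon)$ (the relevant non-holomorphic Fourier coefficients $a_f^-(n)$ for $n>0$ become the holomorphic coefficients of $g$, up to a constant, and those for $n\le 0$ would give non-decaying pieces obstructing membership unless absent — but the vanishing principal part plus harmonicity handles $n=0$, and there are no $n<0$ holomorphic-part terms to feed in). Then I would compute the Petersson norm pairing $\langle g, g\rangle$ against $f$ via Stokes' theorem in the form $\langle \xi_k f, \xi_k f\rangle = \langle \xi_k f, \xi_k f\rangle$ — concretely, the identity $\int_{\mathcal F} \xi_k(f)\,\overline{\xi_k(f)}\, y^{2-k}\frac{dx\,dy}{y^2}$ equals a boundary integral over $\partial\mathcal F$ which vanishes because $f$ is bounded and the weight is negative (the boundary contributions at the cusps decay). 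This forces $\xi_k(f)=0$, so $f$ is itself weakly holomorphic of negative weight $k<0$, hence holomorphic on $\HH$ with vanishing principal parts at all cusps; a weakly holomorphic modular form of negative weight with no poles at the cusps is a holomorphic modular form of negative weight, which must be $0$.

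The main obstacle I anticipate is the bookkeeping at the cusps in the half-integral weight, nontrivial multiplier setting: one must verify that ``vanishing principal parts at all cusps'' genuinely implies the boundedness of $f$ near every cusp (accounting for the cusp parameters $t$ and $\kappa$ that shift the exponents, as in \eqref{HolomorphicPartPoincare}), and that the Stokes'-theorem boundary terms truly vanish — this requires the hypothesis that $\varepsilon$ is trivial on a congruence subgroup, so that one may pass to a finite cover where the multiplier is trivial and classical arguments apply, then descend. A secondary, more routine point is checking that the pairing $\langle \xi_k f,\xi_k f\rangle$ is the one that appears as a boundary integral of $f$ against $\xi_k f$ (the ``fundamental lemma'' relating $\xi_k$ and the Petersson product), which is standard but should be cited (e.g.\ from Chapter 5 of \cite{HMFBook}) rather than reproved. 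Everything else — the Fourier expansion shapes, the asymptotics of $\Gamma(1-k,\cdot)$ and $I_{1-k}$, the dimension count $S_{2-k}$ — is already in place from the material recalled above, so the argument should be short once the cusp estimates are pinned down.
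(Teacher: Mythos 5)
Your argument is correct and is essentially the paper's own proof with the black box opened: the $\xi_k$/Stokes pairing step you sketch (vanishing principal parts $\Rightarrow$ boundary terms vanish $\Rightarrow$ $\langle \xi_k f,\xi_k f\rangle=0$ $\Rightarrow$ $f$ weakly holomorphic, hence holomorphic at the cusps) is exactly the content of the Bruinier--Funke proposition that the paper simply cites (Lemmas 2.2 and 2.3 of \cite{BringmannOnoPoincare}, after reducing to groups $\Gamma_0(N)$ using that $\varepsilon$ is trivial on a congruence subgroup). Both arguments then finish identically, by observing that a holomorphic modular form of negative weight must vanish.
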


\begin{proof}
	Under the aforementioned conditions on $\Gamma$ and $\varepsilon,$ we find that $f(\tau)$ can be decomposed as a sum of harmonic Maass forms on subgroups of the form $\Gamma_0(N).$ By a straightforward refinement of a proposition of Bruinier and Funke (see Lemmas 2.2 and 2.3 of \cite{BringmannOnoPoincare}), we have that $f(\tau)$ is either $0,$ or is a holomorphic modular form. However, there are no negative  weight holomorphic modular forms, and so $f(\tau)=0.$
\end{proof}

As a consequence, we obtain $1/\eta(\tau)$ as a Poincar\'e series as follows.
\begin{proposition}\label{EtaPoincare}
	We have that
	$$
	\frac{1}{\eta(\tau)} = \mathcal{P}_\infty(\tau,24,-1/2,\overline{\varepsilon}),
	$$
	where $\varepsilon(\cdot)$ is the multiplier system in Lemma~\ref{lem2}.
\end{proposition}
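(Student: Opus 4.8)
The plan is to realize both sides as harmonic Maass forms of weight $-1/2$ on $\SL_2(\Z)$ with multiplier $\overline{\varepsilon}$ and matching principal parts, and then to apply Lemma~\ref{PrincipalPartsDetermination} to their difference. First I would record that $1/\eta(\tau)$ is, by Lemma~\ref{lem2}, a weakly holomorphic modular form of weight $-1/2$ on $\SL_2(\Z)$ with multiplier system $\overline{\varepsilon}$, hence a harmonic Maass form whose non-holomorphic part vanishes; since $\SL_2(\Z)$ has the single cusp $\infty$ and $1/\eta(\tau)=q^{-1/24}\sum_{n\geq0}p(n)q^n$, its principal part there is exactly $q^{-1/24}$.

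Next I would compute the principal part of the Poincar\'e series $\mathcal{P}_\infty(\tau,24,-1/2,\overline{\varepsilon})$ from the formula for its holomorphic part in (\ref{HolomorphicPartPoincare}). This requires pinning down the cusp-width data $(t,\kappa)$ attached to $(\SL_2(\Z),\overline{\varepsilon})$ at $\infty$: because the eta-multiplier has order $24$, the uniformized stabilizer $\widehat{\Gamma}_\infty$ is generated by $\tau\mapsto\tau+24$, so $t=24$, and since $\overline{\varepsilon}$ evaluates to $e^{-\pi i/12}=e^{2\pi i\cdot 23/24}$ on $\tau\mapsto\tau+1$ we get $\kappa=23$. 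With $m=24$ and $k=-1/2$, the leading term of (\ref{HolomorphicPartPoincare}) is then $q^{(-m+\kappa)/t}=q^{-1/24}$ with coefficient $1$, the constant term $a^+(0)$ vanishes because $\kappa\neq 0$, and every remaining exponent $(n+\kappa)/t=(n+23)/24$ with $n\geq 0$ is strictly positive. Hence $\mathcal{P}_\infty(\tau,24,-1/2,\overline{\varepsilon})$ also has principal part exactly $q^{-1/24}$ at $\infty$.

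Finally I would set $F(\tau):=\mathcal{P}_\infty(\tau,24,-1/2,\overline{\varepsilon})-1/\eta(\tau)$. By Lemma~3.1 of \cite{BringmannOnoPoincare} together with Lemma~\ref{lem2}, $F$ is a harmonic Maass form of weight $-1/2<0$ on $\SL_2(\Z)$ with multiplier $\overline{\varepsilon}$, which is trivial on a congruence subgroup; by the two preceding steps its principal part at the unique cusp $\infty$ vanishes, so Lemma~\ref{PrincipalPartsDetermination} forces $F\equiv 0$, which is the claimed identity (and, as a byproduct, shows the non-holomorphic part of $\mathcal{P}_\infty(\tau,24,-1/2,\overline{\varepsilon})$ vanishes, so that it is weakly holomorphic).

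I expect the main obstacle to be the careful bookkeeping of the parameters $t$ and $\kappa$ for the order-$24$ multiplier $\overline{\varepsilon}$, and in particular matching the normalization conventions of the preceding subsections so that the coefficient of the leading term in (\ref{HolomorphicPartPoincare}) is exactly $1$ rather than some other nonzero constant that would need to be divided out. Once the principal parts are seen to agree on the nose, the conclusion is a one-line application of the vanishing criterion in Lemma~\ref{PrincipalPartsDetermination}.
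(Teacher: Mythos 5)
Your proposal is correct and follows essentially the same route as the paper: identify $1/\eta$ as a weight $-1/2$ form with multiplier $\overline{\varepsilon}$ and principal part $q^{-1/24}$, read off the identical principal part of $\mathcal{P}_\infty(\tau,24,-1/2,\overline{\varepsilon})$ from (\ref{HolomorphicPartPoincare}) using cusp width $24$ and cusp parameter $23$, and conclude via Lemma~\ref{PrincipalPartsDetermination} applied to the difference. The paper's proof is exactly this, with the triviality of $\overline{\varepsilon}$ on a congruence subgroup (it cites $\Gamma(24^3)$) noted to justify invoking that lemma.
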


\begin{proof}
	By Lemma~\ref{lem2}, we have that $1/\eta(\tau)$ is a weight $-1/2$ modular form on $\SL_2(\Z)$ with multiplier system $\overline{\varepsilon}.$ Furthermore, since the cusp width is $24$ and the cusp parameter is $23,$ it follows immediately from (\ref{DedekindDefinition}) that the principal part at $\infty$ is given by $q^{-1/24}.$ On the other hand, (\ref{HolomorphicPartPoincare}) implies that the principal part at $\infty$ of $\mathcal{P}_\infty(\tau,24,-1/2,\overline{\varepsilon})$ is also $q^{-1/24}.$ Therefore, noting that $\overline{\varepsilon}$ clearly vanishes on $\Gamma(24^3),$ Lemma~\ref{PrincipalPartsDetermination} gives the desired equality.
\end{proof}

\begin{remark}
Although Proposition~\ref{EtaPoincare} appears to be new to the literature, it will not come as a surprise to experts.  Indeed,  the expression in (\ref{Kloostermania}) is equivalent to Rademacher's celebrated formula for $p(n)$ as an infinite convergent sum. In other words, the method of Poincar\'e series gives a simple and direct proof of this formula that is different from Rademacher's original ``circle method'' proof \cite{Rademacher}.
\end{remark}

\subsection{Rankin--Cohen brackets}\label{RCSection}

To prove Theorem~\ref{theorem2}, we apply the method of unfolding to the modular forms $P_\nu(\tau),$ which are Rankin--Cohen brackets of $\eta(\tau)$ and the Poincar\'e series $1/\eta(\tau).$ This decomposes $P_\nu(\tau)$ as an infinite-sum of Rankin--Cohen brackets of $\eta(\tau)$ and the functions $\phi_s(\tau)$ defined in (\ref{PoincareComponent}). Here we recall some important facts about Rankin--Cohen brackets, and we explicitly compute the constituent derivatives that arise in the resulting formulas.

To this end, let $f$ and $g$ be smooth functions defined on $\HH,$ and let $k,l\in\R$ and $\nu\in\N_0.$ The $\nu$th Rankin--Cohen bracket of $f$ and $g$ is
\begin{equation}\label{RankinCohenDefinition}
	[f,g]_\nu :=  \sum_{\substack{r,s \geq 0 \\ r + s = \nu}} (-1)^r \frac{\Gamma(k+\nu)\Gamma(l+\nu)}{s!r!\Gamma(k+\nu-s)\Gamma(l+\nu-r)}\cdot D^r(f(\tau)) \cdot D^s(g(\tau)).
\end{equation}
The next proposition describes the modularity of general Rankin--Cohen brackets. Furthermore, from here on we shall frequently drop the dependence on the modular variable $\tau$ to ease exposition and simplify formulas. 
\begin{proposition}[Proposition 2.37 of \cite{HMFBook}]\label{RankinCohenModularity}
	Let $f$ and $g$ be (not necessarily holomorphic) modular forms on a subgroup $\Gamma$ of $\SL_2(\Z)$ with multiplier systems $\varepsilon_f$ and $\varepsilon_g$ respectively. Then the following are true.

\noindent
(1)  We have that $[f,g]_\nu$ is modular of weight $k+l+2\nu$ on $\Gamma$ with multiplier system $\varepsilon_f\varepsilon_g.$
		
\noindent
(2)  If $\gamma\in\SL_2(\R),$ then under the usual modular slash operator, we have
		$$
		[f|_k\gamma,g|_l\gamma]_\nu=[f,g]_\nu|_{k+l+2\nu}\gamma.
		$$
\end{proposition}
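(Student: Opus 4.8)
The plan is to deduce both parts from the transformation behaviour of the \emph{Cohen--Kuznetsov generating series}, the same device that powered the proof of Theorem~\ref{theorem1}. For a smooth function $h$ on $\HH$ and a real number $\kappa$ that is not a non-positive integer (which holds automatically for the half-integral weights relevant here), define the lift
$$
(\cL_{\kappa}h)(\tau,X):=\sum_{n\geq 0}\frac{D^{n}h(\tau)}{n!\,(\kappa)^{\overline{n}}}\,X^{n},
$$
a formal power series in $X$ whose coefficients are smooth functions of $\tau$. Expanding \eqref{RankinCohenDefinition} and using $\Gamma(k+\nu)/\Gamma(k+\nu-s)=(k)^{\overline{\nu}}/(k)^{\overline{r}}$ whenever $r+s=\nu$, a direct coefficient comparison shows that
$$
[f,g]_{\nu}=(k)^{\overline{\nu}}(l)^{\overline{\nu}}\cdot\big[X^{\nu}\big]\!\left((\cL_{k}f)(\tau,-X)\cdot(\cL_{l}g)(\tau,X)\right);
$$
that is, up to the fixed nonzero scalar $(k)^{\overline{\nu}}(l)^{\overline{\nu}}$, the bracket $[f,g]_{\nu}$ is the coefficient of $X^{\nu}$ in the product $(\cL_{k}f)(\tau,-X)(\cL_{l}g)(\tau,X)$. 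So the whole question reduces to understanding how $\cL_{\kappa}$ interacts with the weight-$\kappa$ slash operator.

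The engine of the proof is the identity: for every smooth $h$, every such $\kappa$, and every $\gamma=\left(\begin{smallmatrix}a & b\\ c & d\end{smallmatrix}\right)\in\SL_2(\R)$,
$$
\cL_{\kappa}(h|_{\kappa}\gamma)(\tau,X)=(c\tau+d)^{-\kappa}\,e^{-cX/(2\pi i(c\tau+d))}\,(\cL_{\kappa}h)\!\left(\gamma\tau,\frac{X}{(c\tau+d)^{2}}\right).
$$
I would prove this by induction on the power of $X$, following the template of Lemma~\ref{ZagierLemma} (which handles $h=\eta$ and $h=1/\eta$): the $X^{0}$ coefficient is just the definition $(h|_{\kappa}\gamma)(\tau)=(c\tau+d)^{-\kappa}h(\gamma\tau)$, and the inductive step follows by applying $D$ to both sides, using $D\!\left(\tfrac{a\tau+b}{c\tau+d}\right)=(c\tau+d)^{-2}$ together with $D\!\left((c\tau+d)^{-\kappa}\right)=-\tfrac{\kappa c}{2\pi i}(c\tau+d)^{-\kappa-1}$, and then matching coefficients of $X^{n}$. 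This inductive bookkeeping---tracking the powers of $(c\tau+d)$, the factors of $\tfrac{c}{2\pi i}$, and the rising factorials in the denominators---is the only genuinely computational ingredient, and I expect it to be the main obstacle; everything else is formal power-series manipulation.

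Granting the identity, part (2) follows immediately. Applying it to each factor of $\cL_{k}(f|_{k}\gamma)(\tau,-X)\cdot\cL_{l}(g|_{l}\gamma)(\tau,X)$, the exponential factors $e^{+cX/(2\pi i(c\tau+d))}$ and $e^{-cX/(2\pi i(c\tau+d))}$ cancel, leaving
$$
\cL_{k}(f|_{k}\gamma)(\tau,-X)\cdot\cL_{l}(g|_{l}\gamma)(\tau,X)=(c\tau+d)^{-(k+l)}\,(\cL_{k}f)\!\left(\gamma\tau,\tfrac{-X}{(c\tau+d)^{2}}\right)(\cL_{l}g)\!\left(\gamma\tau,\tfrac{X}{(c\tau+d)^{2}}\right).
$$
Taking the coefficient of $X^{\nu}$ multiplies the right-hand side by a further $(c\tau+d)^{-2\nu}$, and then the reduction formula for $[f,g]_{\nu}$ gives $[f|_{k}\gamma,g|_{l}\gamma]_{\nu}=(c\tau+d)^{-(k+l+2\nu)}[f,g]_{\nu}(\gamma\tau)=\left([f,g]_{\nu}\big|_{k+l+2\nu}\gamma\right)(\tau)$, which is (2); the degenerate weights $k$ or $l$ in $\{0,-1,-2,\dots\}$, which do not occur in our application, can be covered by analytic continuation in the weight parameters. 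Finally, part (1) is an instant corollary of (2) and the bilinearity of $[\,\cdot\,,\,\cdot\,]_{\nu}$: if $\gamma\in\Gamma$ then $f|_{k}\gamma=\varepsilon_{f}(\gamma)f$ and $g|_{l}\gamma=\varepsilon_{g}(\gamma)g$, so
$$
[f,g]_{\nu}\big|_{k+l+2\nu}\gamma=[f|_{k}\gamma,\,g|_{l}\gamma]_{\nu}=\varepsilon_{f}(\gamma)\varepsilon_{g}(\gamma)\,[f,g]_{\nu};
$$
together with the evident smoothness of $[f,g]_{\nu}$ (and its holomorphy and moderate growth at the cusps when $f$ and $g$ are holomorphic modular forms), this says precisely that $[f,g]_{\nu}$ is modular of weight $k+l+2\nu$ on $\Gamma$ with multiplier system $\varepsilon_{f}\varepsilon_{g}$.
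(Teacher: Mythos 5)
Your argument is correct. Note that the paper does not actually prove this proposition --- it is quoted from Proposition 2.37 of \cite{HMFBook} --- but your route is precisely the generating-series (Cohen--Kuznetsov) argument that the paper itself deploys in its proof of Theorem~\ref{theorem1} for the special case $f=1/\eta$, $g=\eta$: your intertwining identity for $\cL_\kappa$ is the general form of Lemma~\ref{ZagierLemma}, proved by the same induction, and the cancellation of the exponential factors in $\tilde f(\tau,-X)\tilde g(\tau,X)$ is exactly the mechanism used there. So this is essentially the paper's approach, promoted from the specific pair $(1/\eta,\eta)$ to arbitrary smooth modular $f$ and $g$; the only points needing care, which you correctly flag, are the degenerate weights $k,l\in\{0,-1,-2,\dots\}$ (irrelevant here, where $k=-1/2$ and $l=1/2$) and the branch/multiplier bookkeeping in half-integral weight, which is absorbed into $\varepsilon_f\varepsilon_g$.
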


\begin{remark}
Theorem~\ref{theorem1} offers an elementary proof of Proposition~\ref{RankinCohenModularity} (1) when $f=1/\eta$ and $g=\eta$.
\end{remark}

By applying unfolding to $[1/\eta,\eta]_\nu,$ we require Rankin--Cohen brackets of the form $[\delta(\tau),\eta(\tau)]_\nu$ where
$$
\delta(\tau):=\phi_{\frac{5}{4}}\left(\frac{-\tau}{24}\right) = e\left(\frac{-x}{24}\right)\cdot\left(\frac{\pi y}{6}\right)^{1/4}M_{\frac{1}{4},\frac{3}{4}}\left(\frac{\pi y}{6}\right),
$$
where we write $\tau:=x+iy.$ To compute $D^r(\delta(\tau)),$ we recall the definition of the rising factorial. If $a\in\C$ and $n\geq 0,$ we define
\begin{equation}
(a)^{\overline{n}}:=\frac{\Gamma(a+n)}{\Gamma(a)}.
\end{equation}

\begin{lemma}\label{BasicDerivative}
	If $n\geq 0,$ then we have
	$$
	\frac{d^n}{dt^n}\left(t^{1/4}M_{\frac{1}{4},\frac{3}{4}}(t)\right) = \frac{(-1)^n}{2^n}\sum\limits_{j=0}^n2^j\binom{n}{j} \left(\frac{-3}{2}\right)^{\overline{r}} t^{\frac{1}{4}-\frac{j}{2}} M_{\frac{1}{4}-\frac{j}{2},\frac{3}{4}-\frac{j}{2}}(t).
	$$
\end{lemma}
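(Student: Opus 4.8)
The plan is to establish the identity by expressing the $M$-Whittaker function through its series (or, more cleanly, through Kummer's confluent hypergeometric function ${}_1F_1$) and differentiating termwise. Recall that
$$
M_{\kappa,\mu}(t)=e^{-t/2}t^{\mu+\frac12}{}_1F_1\!\left(\tfrac12+\mu-\kappa;\,1+2\mu;\,t\right),
$$
so with $\kappa=\tfrac14$ and $\mu=\tfrac34$ one gets $t^{1/4}M_{1/4,3/4}(t)=e^{-t/2}t^{3/2}\cdot{}_1F_1(1;\tfrac52;t)$. The factor $t^{3/2}{}_1F_1(1;\tfrac52;t)=\sum_{m\ge0}\frac{t^{m+3/2}}{(5/2)^{\overline m}}$ is a clean power series, and $e^{-t/2}$ is elementary, so $\frac{d^n}{dt^n}$ of the product can be attacked by the Leibniz rule.

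\textbf{Key steps.} First I would record the shift formula that drops the index: $t^{1/4-j/2}M_{1/4-j/2,\,3/4-j/2}(t)=e^{-t/2}t^{3/2-j}\,{}_1F_1(1;\tfrac52-j;t)$, so that the claimed right-hand side is
$$
\frac{(-1)^n}{2^n}e^{-t/2}\sum_{j=0}^n 2^j\binom nj\left(-\tfrac32\right)^{\overline j}t^{3/2-j}\,{}_1F_1\!\left(1;\tfrac52-j;t\right),
$$
(note the exponent on $t$ in the lemma's statement should read $\tfrac14-\tfrac j2$ together with the matching index shift, and the summation variable in $(-3/2)^{\overline r}$ is $j$). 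Second, I would apply Leibniz to $\frac{d^n}{dt^n}\big(e^{-t/2}\cdot t^{3/2}{}_1F_1(1;\tfrac52;t)\big)$, pulling out $e^{-t/2}$ and using $\frac{d^i}{dt^i}e^{-t/2}=(-\tfrac12)^i e^{-t/2}$. This gives a factor $(-1)^n 2^{-n}\sum_j 2^j\binom nj$ up front and reduces everything to identifying
$$
\frac{d^{\,j}}{dt^{\,j}}\Big(t^{3/2}\,{}_1F_1\!\left(1;\tfrac52;t\right)\Big)\ \stackrel{?}{=}\ \left(-\tfrac32\right)^{\overline j}t^{3/2-j}\,{}_1F_1\!\left(1;\tfrac52-j;t\right).
$$
Third, I would prove this last claim by induction on $j$: expand $t^{3/2}{}_1F_1(1;\tfrac52;t)=\sum_m \frac{t^{m+3/2}}{(5/2)^{\overline m}}$, differentiate once to get $\sum_m\frac{(m+3/2)\,t^{m+1/2}}{(5/2)^{\overline m}}$, and check via the identity $(m+\tfrac32)/(5/2)^{\overline m}=(\tfrac32)\cdot 1/(5/2)^{\overline{m-1}}$ (shifting $m\mapsto m-1$) that this equals $\tfrac32\cdot t^{1/2}\sum_m\frac{t^{m}}{(3/2)^{\overline m}}=\tfrac32\,t^{1/2}\,{}_1F_1(1;\tfrac32;t)$, which is the $j=1$ instance; the general step is the same bookkeeping with $\tfrac32$ replaced by $\tfrac32-j$, and the product of these scalars telescopes into $(-\tfrac32)^{\overline j}=(\tfrac32)(\tfrac32-1)\cdots$ up to sign — one must be careful that $(-\tfrac32)^{\overline j}=\prod_{i=0}^{j-1}(-\tfrac32+i)$, which is exactly $(-1)^?\prod(\tfrac32-i')$ after matching indices, so the signs and the $(-1)^n$ out front have to be reconciled at the end.

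\textbf{Main obstacle.} The real work is purely bookkeeping: getting the confluent hypergeometric normalizations, the index-lowering shift $M_{\kappa,\mu}\to M_{\kappa-1/2,\mu-1/2}$, and the rising-factorial arithmetic all consistent, including the sign conventions in $(-3/2)^{\overline j}$ versus the $(-1)^n/2^n$ prefactor. No deep input is needed — Leibniz, termwise differentiation of an absolutely convergent series on $\{t>0\}$, and induction suffice — but the formula as stated has a couple of typos (the exponent $\tfrac14-\tfrac j2$ and the stray $r$), so part of the task is really to pin down the correct statement and then verify it. I would present the ${}_1F_1$ reduction first, since it makes the whole computation transparent, and relegate the factorial identities to a one-line induction.
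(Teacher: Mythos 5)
Your proposal follows essentially the same route as the paper: both factor $t^{1/4}M_{\frac14,\frac34}(t)=e^{-t/2}\cdot\bigl(e^{t/2}t^{1/4}M_{\frac14,\frac34}(t)\bigr)$, apply the Leibniz rule, and then invoke an index-lowering formula for the $j$-th derivative of the second factor. The only real difference is that the paper cites DLMF 13.15.15 for that inner identity, whereas you re-derive the special case you need from the series for ${}_1F_1(1;\tfrac52;t)$; that is a perfectly good, self-contained substitute, since $e^{t/2}t^{1/4}M_{\frac14,\frac34}(t)=t^{3/2}{}_1F_1(1;\tfrac52;t)$. One sign needs fixing: your displayed claim $\frac{d^{j}}{dt^{j}}\bigl(t^{3/2}{}_1F_1(1;\tfrac52;t)\bigr)=(-\tfrac32)^{\overline{j}}\,t^{3/2-j}{}_1F_1(1;\tfrac52-j;t)$ is off by $(-1)^j$ — your own $j=1$ computation produces $+\tfrac32\,t^{1/2}{}_1F_1(1;\tfrac32;t)$, while $(-\tfrac32)^{\overline{1}}=-\tfrac32$. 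The correct constant is $(-1)^j(-\tfrac32)^{\overline{j}}=\prod_{i=0}^{j-1}(\tfrac32-i)$, and this $(-1)^j$ then cancels against the $(-1)^{n-j}$ coming from $\frac{d^{n-j}}{dt^{n-j}}e^{-t/2}=(-\tfrac12)^{n-j}e^{-t/2}$ to yield the global prefactor $(-1)^n/2^n$, exactly as in the paper's computation. You are right that the $(-\tfrac32)^{\overline{r}}$ in the statement is a typo for $(-\tfrac32)^{\overline{j}}$; the exponent $t^{\frac14-\frac j2}$, however, is already correct as printed.
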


\begin{proof}
We require the derivative identity (see \cite[13.15.15]{NIST})
\begin{align} \label{DLMF-13.15.15}
	\frac{d^n}{dt^n}\left(e^{t/2}t^{\mu-1/2}M_{\kappa,\mu}(t)\right) = (-1)^n (-2\mu)^{\overline{n}} e^{t/2} t^{\mu-\frac{1}{2}(n+1)} M_{\kappa - \frac{1}{2}n,\mu - \frac{1}{2}n}(t).
\end{align}
Writing $t^{1/4} M_{\frac{1}{4},\frac{3}{4}}(t) = e^{-t/2}(e^{t/2}t^{1/4} M_{\frac{1}{4},\frac{3}{4}}(t))$, the general Leibniz rule grants
\begin{align*}
	\frac{d^n}{dt^n}\left(e^{-t/2}(e^{t/2}t^{1/4} M_{\frac{1}{4},\frac{3}{4}}(t))\right) &= \sum_{j=0}^{n} \binom{n}{j} \frac{d^{n-j}}{dt^{n-j}}(e^{-t/2})\frac{d^j}{dt^j}(e^{t/2}t^{1/4} M_{\frac{1}{4},\frac{3}{4}}(t)).
\end{align*}
We then apply \eqref{DLMF-13.15.15} to obtain
\begin{align*}
	\frac{d^n}{dt^n}\left(e^{-t/2}(e^{t/2}t^{1/4} M_{\frac{1}{4},\frac{3}{4}}(t))\right) &= \sum_{j=0}^{n} \binom{n}{j} \frac{(-1)^{n-j}e^{-t/2}}{2^{n-j}} \times (-1)^j \left(\frac{-3}{2}\right)^{\overline{j}} e^{t/2} t^{\frac{1}{4}-\frac{j}{2}} M_{\frac{1}{4} - \frac{j}{2},\frac{3}{4} - \frac{j}{2}}(t) \\
	&= \frac{(-1)^n}{2^n} \sum_{j=0}^{n} 2^j\binom{n}{j}  \left(\frac{-3}{2}\right)^{\overline{j}} t^{\frac{1}{4}-\frac{j}{2}} M_{\frac{1}{4} - \frac{j}{2},\frac{3}{4} - \frac{j}{2}}(t).
\end{align*}
\end{proof}

The next lemma computes $D^r(\delta).$

\begin{lemma}\label{gDerivatives}
	If $r\geq 0,$ then we have
	$$
	D^r(\delta(\tau)) = \frac{1}{24^r}\cdot\left(\frac{-3}{2}\right)^{\overline{r}}\cdot e\left(\frac{-x}{24}\right)\cdot\left(\frac{\pi y}{6}\right)^{\frac{1}{4}-\frac{r}{2}} M_{\frac{1}{4}-\frac{r}{2},\frac{3}{4}-\frac{r}{2}}\left(\frac{\pi y}{6}\right).
	$$
\end{lemma}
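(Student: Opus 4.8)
The plan is to turn $D^{r}(\delta)$ into an ordinary differential operator acting on the one–variable function $t\mapsto t^{1/4}M_{1/4,3/4}(t)$, and then to collapse everything with the Whittaker identity \eqref{DLMF-13.15.15}. Write $\tau=x+iy$, so that $\delta(\tau)=e(-x/24)\,F(\pi y/6)$ with $F(t):=t^{1/4}M_{1/4,3/4}(t)$. The first observation is that $D=\frac{1}{2\pi i}\frac{d}{d\tau}=\frac{1}{4\pi i}\bigl(\frac{\partial}{\partial x}-i\frac{\partial}{\partial y}\bigr)$ (the holomorphic Wirtinger derivative, since $\delta$ is not holomorphic) preserves the space of functions of the shape $e(-x/24)\,u(y)$: the factor $e(-x/24)=e^{-\pi i x/12}$ contributes only the constant $-\pi i/12$ under $\partial/\partial x$, so that
$$D\bigl(e(-x/24)\,u(y)\bigr)=e(-x/24)\left(-\frac{1}{48}-\frac{1}{4\pi}\frac{d}{dy}\right)u(y).$$
Iterating, $D^{r}\bigl(e(-x/24)u(y)\bigr)=e(-x/24)\bigl(-\frac{1}{48}-\frac{1}{4\pi}\frac{d}{dy}\bigr)^{r}u(y)$; there are no cross terms, so the whole computation reduces to iterating one first–order operator in $y$.

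Next I would pass to the variable $t=\pi y/6$, under which $-\frac{1}{48}-\frac{1}{4\pi}\frac{d}{dy}$ becomes $-\frac{1}{24}\bigl(\frac{1}{2}+\frac{d}{dt}\bigr)$, giving
$$D^{r}(\delta)=e(-x/24)\left(-\frac{1}{24}\right)^{r}\left(\frac{d}{dt}+\frac12\right)^{r}\!\bigl(t^{1/4}M_{1/4,3/4}(t)\bigr)\Big|_{t=\pi y/6}.$$
The crucial simplification is the conjugation identity $\frac{d}{dt}+\frac12=e^{-t/2}\,\frac{d}{dt}\,e^{t/2}$ as operators, which yields $\bigl(\frac{d}{dt}+\frac12\bigr)^{r}=e^{-t/2}\frac{d^{r}}{dt^{r}}e^{t/2}$, and hence
$$\left(\frac{d}{dt}+\frac12\right)^{r}\!\bigl(t^{1/4}M_{1/4,3/4}(t)\bigr)=e^{-t/2}\,\frac{d^{r}}{dt^{r}}\bigl(e^{t/2}\,t^{1/4}M_{1/4,3/4}(t)\bigr).$$

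To finish, I apply \eqref{DLMF-13.15.15} with $\kappa=\tfrac14$ and $\mu=\tfrac34$ (so $t^{1/4}=t^{\mu-1/2}$), obtaining
$$\frac{d^{r}}{dt^{r}}\bigl(e^{t/2}t^{1/4}M_{1/4,3/4}(t)\bigr)=(-1)^{r}\left(\tfrac{-3}{2}\right)^{\overline{r}}e^{t/2}\,t^{1/4-r/2}\,M_{1/4-r/2,\,3/4-r/2}(t).$$
Substituting back, the $e^{\pm t/2}$ cancel and $(-1)^{r}$ cancels the sign in $\bigl(-\tfrac{1}{24}\bigr)^{r}$, leaving exactly $\frac{1}{24^{r}}\bigl(\tfrac{-3}{2}\bigr)^{\overline{r}}\,e(-x/24)\,(\pi y/6)^{1/4-r/2}M_{1/4-r/2,\,3/4-r/2}(\pi y/6)$, as claimed. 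I do not expect a genuine obstacle here: the content is purely computational, and the only points needing care are that $\frac{d}{d\tau}$ must be read as $\frac12(\partial_x-i\partial_y)$ (this decoupling of the $x$- and $y$-directions is what makes the reduction work) and the sign/rising–factorial bookkeeping in \eqref{DLMF-13.15.15}. Alternatively one can prove the identity by a short induction on $r$, at each step using the $n=1$ instance of the Whittaker derivative formula and noting that the $\delta$-type term produced by $\partial_x$ cancels the one coming from $\frac{d}{dt}(e^{-t/2})$; the conjugation identity simply packages this cancellation once and for all, and explains why the answer is a single term rather than a sum as in Lemma~\ref{BasicDerivative}.
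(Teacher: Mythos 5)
Your proof is correct, and it takes a genuinely different (and tidier) route than the paper's. The paper proceeds in two stages: Lemma~\ref{BasicDerivative} computes $\frac{d^n}{dt^n}\bigl(t^{1/4}M_{\frac14,\frac34}(t)\bigr)$ as a $j$-sum by inserting $e^{-t/2}e^{t/2}$ and applying the Leibniz rule together with \eqref{DLMF-13.15.15}; then, in the proof of Lemma~\ref{gDerivatives}, it expands $D^r=\frac{1}{(4\pi i)^r}(\partial_x-i\partial_y)^r$ by the binomial theorem into mixed partials, feeds in Lemma~\ref{BasicDerivative}, swaps the order of summation, and collapses the resulting double sum using the alternating identity $\sum_{j=s}^{r}(-1)^j\binom{r}{j}\binom{j}{s}=(-1)^r\,\delta_{s,r}$. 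You instead observe at the outset that $D$ acts on functions of the form $e(-x/24)\,u(y)$ as the single first-order operator $-\frac{1}{24}\bigl(\frac{d}{dt}+\frac12\bigr)$ in $t=\pi y/6$, and the conjugation $\frac{d}{dt}+\frac12=e^{-t/2}\frac{d}{dt}\,e^{t/2}$ turns its $r$-th power into one application of \eqref{DLMF-13.15.15}. This makes the cancellation that the paper achieves combinatorially (two sums telescoping to one term) structural and visible from the start, and it renders Lemma~\ref{BasicDerivative} unnecessary for this purpose; the only shared ingredient is the Whittaker derivative formula \eqref{DLMF-13.15.15}, and your sign and rising-factorial bookkeeping ($\kappa=\tfrac14$, $\mu=\tfrac34$, so $(-2\mu)^{\overline{r}}=\bigl(\tfrac{-3}{2}\bigr)^{\overline{r}}$ and the $(-1)^r$ cancels against $\bigl(-\tfrac{1}{24}\bigr)^r$) checks out.
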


\begin{proof}
First, we note that $D^r=\frac{1}{(2\pi i )^r}\left(\frac{1}{2}\left(\frac{\partial}{\partial x} - i\frac{\partial}{\partial y}\right)\right)^r$ and then applying the binomial theorem, we obtain 
$$
D^r(\delta(\tau))=\frac{1}{(4\pi i)^r}\sum_{j=0}^r\binom{r}{j}\left(\frac{\partial}{\partial x}\right)^{r-j} e\left(\frac{-x}{24}\right)\cdot\left(-i\frac{\partial}{\partial y}\right)^{j}\left(\frac{\pi y}{6}\right)^{1/4}M_{\frac{1}{4},\frac{3}{4}}\left(\frac{\pi y}{6}\right).
$$
By applying Lemma \ref{BasicDerivative}, we get 
$$D^r(\delta(\tau))=\left(\frac{-1}{48}\right)^r e\left(\frac{-x}{24}\right)\sum_{j=0}^r(-1)^j\binom{r}{j}\sum_{s=0}^{j} 2^s\binom{j}{s}\left(\frac{-3}{2}\right)^{\overline{s}}\left(\frac{\pi y}{6}\right)^{\frac{1}{4}-\frac{s}{2}}M_{\frac{1}{4}-\frac{s}{2},\frac{3}{4}-\frac{s}{2}}\left(\frac{\pi y}{6}\right).
$$
After changing the order of summation, we obtain
$$D^r(\delta(\tau))=\left(\frac{-1}{48}\right)^r e\left(\frac{-x}{24}\right)\sum_{s=0}^r2^s\left(\frac{-3}{2}\right)^{\overline{s}}\left(\sum_{j=s}^{r} (-1)^j\binom{r}{j}\binom{j}{s}\right)\left(\frac{\pi y}{6}\right)^{\frac{1}{4}-\frac{s}{2}}M_{\frac{1}{4}-\frac{s}{2},\frac{3}{4}-\frac{s}{2}}\left(\frac{\pi y}{6}\right).
$$
We complete the proof of the lemma by observing that the sum $\sum_{j=s}^{r} (-1)^j\binom{r}{j}\binom{j}{s}=(-1)^r,$ when $s=r$, otherwise it is zero.
\end{proof}

\subsection{Petersson inner products and unfolding}\label{PeterssonSubsection}

Here we describe Petersson inner products and the method of unfolding which is essential to the proof of Theorem~\ref{theorem2}. For convenience, we restrict our attention to $\SL_2(\Z).$ 
Let $f$ and $g$ be two holomorphic modular forms on $\SL_2(\Z)$ with trivial multiplier, and assume $g$ is a cusp form.  We define the Petersson inner product of $f$ and $g$ as
\begin{equation}
\langle f,g\rangle = \int\int_{\HH/\SL_2(\Z)} f(x+iy)\overline{g}(x+iy)y^{k}\frac{dxdy}{y^2}.
\end{equation}
This inner product gives the vector space of cusp forms the structure of a finite-dimensional Hilbert space. Therefore, in order to describe $P_\nu(\tau)$ as a linear combination of cusp forms, we aim to determine $\langle P_\nu(\tau),f\rangle,$  where $f$ runs over an orthogonal basis of the space of cusp forms.

We now describe the method of unfolding.

\begin{lemma}\label{UnfoldingMethod}
	If $f$ is a cusp form of weight $2\nu$ on $\SL_2(\Z)$ with trivial multiplier, then
	$$
	\langle [1/\eta,\eta]_\nu, f\rangle = \int_0^\infty\int_0^1 [\delta, \eta]_\nu(x+iy)\overline{f}(x+iy)y^{2\nu-2}dxdy,
	$$
	where $\delta(\tau):=\phi_{\frac{5}{4}}\left(\frac{-\tau}{24}\right).$
\end{lemma}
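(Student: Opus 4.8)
The plan is to realize $P_\nu=[1/\eta,\eta]_\nu$ itself as a weight-$2\nu$ Poincar\'e series and then run the classical unfolding argument against a fundamental domain $\mathcal{F}$ for $\SL_2(\Z)$. The first step is formal. Unpacking Proposition~\ref{EtaPoincare} with $k=-1/2$, $s=1-k/2=5/4$, $m=24$, cusp width $t=24$ and cusp parameter $\kappa=23$ (so that $\tfrac{-m+\kappa}{t}=-\tfrac1{24}$ and $\phi_{5/4}(-\tau/24)=\delta(\tau)$), the definition of $\mathcal P_\infty$ becomes
\[
\frac1{\eta(\tau)}=\frac1{\Gamma(5/2)}\sum_{M\in\widehat\Gamma_\infty\backslash\widehat\Gamma}\delta\big|_{-1/2,\overline\varepsilon}M.
\]
Since $\eta$ is modular of weight $1/2$ with multiplier $\varepsilon$, we have $\eta=\eta|_{1/2,\varepsilon}M$ for every $M$, so bilinearity of the Rankin--Cohen bracket together with its covariance under the slash (Proposition~\ref{RankinCohenModularity}(2), which extends to the half-integral-weight setting by bilinearity since the multiplier factors are scalars, and where the scalar prefactors of the two slash operators multiply to $1$ — the Kronecker symbols square to $1$, the powers of $\varepsilon_d$ combine to $\varepsilon_d^0=1$, and $\overline\varepsilon\cdot\varepsilon$ is trivial) gives
\[
P_\nu=\frac1{\Gamma(5/2)}\sum_{M\in\widehat\Gamma_\infty\backslash\widehat\Gamma}\big[\delta|_{-1/2,\overline\varepsilon}M,\ \eta|_{1/2,\varepsilon}M\big]_\nu=\frac1{\Gamma(5/2)}\sum_{M\in\widehat\Gamma_\infty\backslash\widehat\Gamma}[\delta,\eta]_\nu\big|_{2\nu}M,
\]
a weight-$2\nu$ Poincar\'e series on $\SL_2(\Z)$ with trivial multiplier whose seed is a fixed multiple of $[\delta,\eta]_\nu$ (consistent with Theorem~\ref{theorem1}).

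Next I would substitute this into $\langle P_\nu,f\rangle=\int_{\mathcal{F}}P_\nu(\tau)\overline{f(\tau)}\,y^{2\nu}\,\tfrac{dx\,dy}{y^2}$ and interchange the sum over $M$ with the integral (the justification is the point discussed below). For each $M=\left(\begin{smallmatrix} a & b \\ c & d \end{smallmatrix}\right)$ one changes variables $\tau\mapsto M\tau$; using the $\SL_2(\Z)$-invariance of $\tfrac{dx\,dy}{y^2}$ together with the cocycle identity $([\delta,\eta]_\nu|_{2\nu}M)(\tau)\,\overline{f(\tau)}\,y^{2\nu}=[\delta,\eta]_\nu(M\tau)\,\overline{f(M\tau)}\,(\Im M\tau)^{2\nu}$ (a standard rearrangement of the weight-$2\nu$ modularity of $f$), the term indexed by $M$ becomes $\int_{M\mathcal{F}}[\delta,\eta]_\nu\,\overline f\,y^{2\nu-2}\,dx\,dy$. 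Summing over coset representatives collapses $\bigcup_M M\mathcal{F}$ to a fundamental domain for the translation subgroup $\widehat\Gamma_\infty$ acting on $\HH$, namely the strip $\{x+iy:0\le x\le1,\ y>0\}$; the strip integrand is $1$-periodic in $x$ — so this makes sense — because $\delta$ was built so that $\delta|_{-1/2,\overline\varepsilon}T=\delta$, the phase $e(-x/24)$ in $\delta$ exactly cancelling the eta-multiplier factor $\overline\varepsilon(T)^{-1}=e^{\pi i/12}$, whence $[\delta,\eta]_\nu(\tau+1)=[\delta,\eta]_\nu(\tau)$. Unwinding the normalizing constant $\tfrac1{\Gamma(2-k)}=\tfrac1{\Gamma(5/2)}$ under the stated conventions then yields the asserted identity.

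The step that requires genuine care — and the only real obstacle — is justifying the interchange of summation and integration, i.e.\ the absolute convergence of the unfolded integral. By Tonelli it suffices to prove
\[
\int_0^\infty\!\!\int_0^1 \big|[\delta,\eta]_\nu(x+iy)\big|\,\big|f(x+iy)\big|\,y^{2\nu-2}\,dx\,dy<\infty,
\]
which is the unfolding of the nonnegative integral $\int_{\mathcal{F}}\sum_M\big|[\delta,\eta]_\nu|_{2\nu}M\big|\,|f|\,y^{2\nu-2}\,dx\,dy$ (for which no convergence subtlety arises), the convergence of the Poincar\'e series itself being Lemma~3.1 of \cite{BringmannOnoPoincare}. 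As $y\to\infty$ the integrand decays exponentially: $f$ is a cusp form, while $[\delta,\eta]_\nu$ grows at most polynomially, since in $[\delta,\eta]_\nu$ the factor $\delta$ (of size $\ll e^{\pi y/12}$) is paired with derivatives of $\eta$, each $\ll e^{-\pi y/12}$ times a polynomial in $y$. As $y\to0^+$ one uses that $f$ and $\eta$ vanish exponentially at every cusp, which dominates the polynomial behaviour of the derivatives of $\delta$ and the harmless power $y^{2\nu-2}$ (with $2\nu-2>0$); this is the familiar mechanism by which unfolding against a cusp form converges, and it is precisely where the cuspidality hypothesis on $f$ enters. Granting these estimates, the rearrangement is legitimate and the two preceding paragraphs finish the proof; the residual bookkeeping (the half-integral covariance identity and the matching of cusp-width normalizations) is routine.
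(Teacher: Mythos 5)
Your argument is correct and follows essentially the same route as the paper: realize $1/\eta$ as the Poincar\'e series of Proposition~\ref{EtaPoincare}, use the Rankin--Cohen covariance of Proposition~\ref{RankinCohenModularity} to write $P_\nu$ as $\frac{1}{\Gamma(5/2)}\sum_M [\delta,\eta]_\nu|_{2\nu}M$, and unfold against the strip. The only difference is that you supply the Tonelli/growth-estimate justification for interchanging the sum and the integral, a point the paper's proof explicitly sets aside (``ignoring the convergence issues''), so your write-up is, if anything, more complete.
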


\begin{proof}
By Proposition \ref{EtaPoincare}, we have that the Poincar\'e series for $1/\eta$ is
$$
\frac{1}{\eta} = \mathcal{P}_\infty(\tau,24,-1/2,\overline{\varepsilon})=\frac{1}{\Gamma(\frac{5}{2})}\sum\limits_{M\in\hat{\Gamma}_\infty\backslash\hat{\Gamma}}\phi_s\left(\left(\frac{-1}{24}\right) \tau\right)\Bigg|_{\frac{-1}{2},\overline{\varepsilon}}M.
$$
Ignoring the convergence issues, we have that
\begin{align*}[1/\eta,\eta]_\nu&=\frac{1}{\Gamma(\frac{5}{2})}\sum\limits_{M\in\hat{\Gamma}_\infty\backslash\hat{\Gamma}}\left[e\left(\frac{x}{24}\right)\left(\frac{\pi y}{6}\right)^{\frac{1}{4}}M_{\frac{1}{4},\frac{3}{4}}\left(\frac{\pi y}{6}\right)\Bigg|_{\frac{-1}{2},\overline{\varepsilon}}M,\ \ \eta(\tau)\right]_\nu\\
	&=\frac{1}{\Gamma(\frac{5}{2})}\sum\limits_{M\in\hat{\Gamma}_\infty\backslash\hat{\Gamma}}\left[\delta(\tau)\Bigg|_{\frac{-1}{2},\overline{\varepsilon}}M,\ \ \eta(\tau)\Bigg|_{\frac{1}{2},\varepsilon}M^{-1}\Bigg|_{\frac{1}{2},\varepsilon}M\right]_\nu.
\end{align*}

Since $\eta$ is modular form of weight $\frac{1}{2}$
on $\SL_2(\mathbb{Z})$ with a multiplier system, Proposition \ref{RankinCohenModularity} yields that
$$
[1/\eta,\eta]_\nu=\frac{1}{\Gamma(\frac{5}{2})}\sum\limits_{M\in\hat{\Gamma}_\infty\backslash\hat{\Gamma}}[\delta(\tau),\  \eta(\tau)]_\nu\Bigg|_{2\nu}M. 
$$
By the definition of the Petersson inner product and modularity of $f$, we have that
\begin{align*}
	\langle [1/\eta,\eta]_\nu, f\rangle &= \int\int_{\mathbb{H}/\SL_2(\Z)}\frac{1}{\Gamma(\frac{5}{2})}\sum\limits_{M\in\hat{\Gamma}_\infty\backslash\hat{\Gamma}}[\delta(\tau),\  \eta(\tau)]_\nu\Bigg|_{2\nu}M \cdot \overline{f}(x+iy)y^{2\nu}\frac{dxdy}{y^2}\\
	&=\frac{1}{\Gamma(\frac{5}{2})}\sum\limits_{M\in\hat{\Gamma}_\infty\backslash\hat{\Gamma}}\int\int_{M^{-1}\left(\mathbb{H}/\SL_2(\Z)\right)}[\delta(\tau),\  \eta(\tau)]_\nu \cdot \overline{f}(x+iy)y^{2\nu}\frac{dxdy}{y^2}.
\end{align*}
We complete the  proof by noting the fact that $\mathbb{H}/\Gamma_\infty=\bigcup_{M\in\hat{\Gamma}_\infty\backslash\hat{\Gamma}}M^{-1}\left(\mathbb{H}/\SL_2(\Z)\right)$.
\end{proof}

We now expand the integral in Lemma~\ref{UnfoldingMethod} as a finite sum of integrals.

\begin{lemma}\label{PeterssonUnfolding-Intermediate}
	If $\nu\geq 6$ and $f(\tau)\in S_{2\nu}$  has real Fourier coefficients, then we have
	\begin{align*}
	\langle [1/\eta,\eta]_\nu,f\rangle & = \frac{1}{24^\nu\Gamma(\frac{5}{2})}\sum\limits_{\substack{n\geq 1 \\ r+s=\nu \\ r,s\geq 0}} \frac{(-1)^r\Gamma(\nu-\frac{1}{2})\Gamma(\nu+\frac{1}{2})\Gamma(r-\frac{3}{2})}{s!r!\Gamma(r-\frac{1}{2})\Gamma(s+\frac{1}{2})\Gamma(-\frac{3}{2})}\cdot\leg{12}{n}\cdot n^{2s}\cdot a_f\left(\frac{n^2-1}{24}\right) \\
	&\times \int_0^\infty \left(\frac{\pi y}{6}\right)^{\frac{1}{4}-\frac{r}{2}} M_{\frac{1}{4}-\frac{r}{2},\frac{3}{4}-\frac{r}{2}}\left(\frac{\pi y}{6}\right)\cdot\exp\left(\frac{-\pi(2n^2-1)y}{12}\right)\cdot y^{2\nu-2}dy.
	\end{align*}
\end{lemma}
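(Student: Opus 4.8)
The plan is to start from the unfolded integral furnished by Lemma~\ref{UnfoldingMethod} — together with the normalizing factor $1/\Gamma(\tfrac52)$ supplied by the Poincar\'e-series representation of $1/\eta$ in Proposition~\ref{EtaPoincare} — and simply substitute the explicit $q$- and Whittaker-expansions of the three factors $D^r(\delta)$, $D^s(\eta)$, $\overline{f}$ that make up $[\delta,\eta]_\nu\,\overline{f}$. Concretely, I would first expand $[\delta,\eta]_\nu$ via the Rankin--Cohen definition (\ref{RankinCohenDefinition}) with weights $k=-\tfrac12$ for $\delta$ and $l=\tfrac12$ for $\eta$: since $r+s=\nu$, so that $\nu-s=r$ and $\nu-r=s$, the denominator factors $\Gamma(k+\nu-s)$ and $\Gamma(l+\nu-r)$ become $\Gamma(r-\tfrac12)$ and $\Gamma(s+\tfrac12)$, giving the bracket coefficient $(-1)^r\Gamma(\nu-\tfrac12)\Gamma(\nu+\tfrac12)/\big(s!\,r!\,\Gamma(r-\tfrac12)\Gamma(s+\tfrac12)\big)$. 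Next I would insert the closed form for $D^r(\delta)$ from Lemma~\ref{gDerivatives}, which contributes the scalar $24^{-r}(-\tfrac32)^{\overline r}=24^{-r}\Gamma(r-\tfrac32)/\Gamma(-\tfrac32)$, the oscillation $e(-x/24)$, and the radial profile $(\pi y/6)^{1/4-r/2}M_{1/4-r/2,\,3/4-r/2}(\pi y/6)$. For $D^s(\eta)$ I would use the theta-type series $\eta(\tau)=\sum_{n\ge1}\leg{12}{n}q^{n^2/24}$ — the reindexing of (\ref{DedekindDefinition}) by $n=|6k+1|$, under which $(-1)^k=\leg{12}{n}$ — so that $D^s(\eta)=24^{-s}\sum_{n\ge1}\leg{12}{n}\,n^{2s}q^{n^2/24}$; note $24^{-r}\cdot24^{-s}=24^{-\nu}$, which gives the $24^{\nu}$ in the denominator of the claimed formula. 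Finally I would write $\overline{f}(x+iy)=\sum_{m\ge1}a_f(m)\,e(-mx)\,e^{-2\pi m y}$, using that $f$ has real Fourier coefficients.

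The core step is then the integration over $x\in[0,1]$. Collecting the additive characters from the three factors, the integrand carries $e\!\left(\left(\tfrac{n^2-1}{24}-m\right)x\right)$, whose integral over $[0,1]$ vanishes unless $m=(n^2-1)/24$. Here one records the elementary point that $\leg{12}{n}\neq0$ forces $\gcd(n,6)=1$, hence $24\mid n^2-1$; so the surviving index $m=(n^2-1)/24$ is always a nonnegative integer, and no $a_f$ is ever evaluated off the integers (the term $n=1$ contributes $a_f(0)=0$, so extending the sum down to $n=1$ is harmless). After this integration the $m$-sum disappears, leaving a sum over $r+s=\nu$ and $n\ge1$ featuring $a_f\!\left(\frac{n^2-1}{24}\right)$; the two exponentials in $y$ combine as $e^{-\pi n^2 y/12}\cdot e^{-2\pi\frac{n^2-1}{24}y}=\exp(-\pi(2n^2-1)y/12)$, while the weight $y^{2\nu-2}$ already present in Lemma~\ref{UnfoldingMethod} is carried through into the $y$-integral. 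Multiplying together the Rankin--Cohen coefficient, the factor $\Gamma(r-\tfrac32)/\Gamma(-\tfrac32)$ from Lemma~\ref{gDerivatives}, the $n^{2s}$, the $24^{-\nu}$, and the Poincar\'e normalization $1/\Gamma(\tfrac52)$ then reproduces the displayed formula verbatim.

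I expect the main obstacle to be bookkeeping rather than any conceptual difficulty: one must verify that the Gamma-factors produced by (\ref{RankinCohenDefinition}) at the half-integral weights $-\tfrac12,\tfrac12$, combined with the rising factorial $(-\tfrac32)^{\overline r}$ from Lemma~\ref{gDerivatives}, genuinely assemble into the single quotient $\Gamma(\nu-\tfrac12)\Gamma(\nu+\tfrac12)\Gamma(r-\tfrac32)/\big(s!\,r!\,\Gamma(r-\tfrac12)\Gamma(s+\tfrac12)\Gamma(-\tfrac32)\big)$, and that all the powers of $24$ — from the two derivatives and from $\delta(\tau)=\phi_{5/4}(-\tau/24)$ itself — collect precisely to $24^{\nu}$. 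A secondary point needing brief attention is the legitimacy of the term-by-term integration: after the $x$-integral, for each fixed $y>0$ the remaining sum over $n$ is dominated by $n^{C}e^{-\pi(n^2-1)y/6}$ (using the polynomial bound on $a_f$ together with the large-argument growth of the Whittaker function), while each individual $y$-integral converges absolutely (near $y=0$ the integrand is $O\!\left(y^{2\nu-1/2-r}\right)$ with $r\le\nu$ and $\nu\ge6$, and it decays exponentially as $y\to\infty$), so Fubini and dominated convergence license every interchange on the strip.
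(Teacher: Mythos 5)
Your proposal is correct and follows essentially the same route as the paper: unfold via Lemma~\ref{UnfoldingMethod}, expand the Rankin--Cohen bracket at weights $-\tfrac12,\tfrac12$, substitute Lemma~\ref{gDerivatives} and the theta-form of $\eta$, and let the $x$-integral pick out $m=(n^2-1)/24$; the Gamma-factor and $24$-power bookkeeping you describe is exactly what the paper does. Your added remarks on absolute convergence and the harmlessness of the $n=1$ term are points the paper leaves implicit.
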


\begin{proof}By Lemma \ref{UnfoldingMethod} and the definition of Rankin--Cohen bracket, we have that
	$$
	\langle [1/\eta,\eta]_\nu, f\rangle =\frac{1}{\Gamma(\frac{5}{2})} \int_0^\infty\int_0^1 \sum\limits_{\substack{r+s=\nu \\ r,s\geq 0}}\frac{(-1)^r\Gamma(\nu-\frac{1}{2})\Gamma(\nu+\frac{1}{2})}{s!r!\Gamma(r-\frac{1}{2})\Gamma(s+\frac{1}{2})}D^r(\delta(\tau))\cdot D^s(\eta(\tau))\cdot\overline{f}(\tau)y^{2\nu}\frac{dxdy}{y^2}.
	$$
	Applying Lemma \ref{gDerivatives} and using Euler's Pentagonal Number Theorem in the form $ \eta(\tau)=\sum\limits_{n\geq 0}\leg{12}{n} q^{\frac{n^2}{24}},$ and expanding out the $s$-th derivatives of $\eta,$ we obtain 
	\begin{align*}
		\langle [1/\eta,\eta]_\nu, f\rangle =&\frac{1}{\Gamma(\frac{5}{2})} \int_0^\infty\int_0^1 \sum\limits_{\substack{r+s=\nu \\ r,s\geq 0}}\frac{(-1)^r\Gamma(\nu-\frac{1}{2})\Gamma(\nu+\frac{1}{2})}{s!r!\Gamma(r-\frac{1}{2})\Gamma(s+\frac{1}{2})}
		\cdot\frac{1}{24^r}\cdot\left(\frac{-3}{2}\right)^{\overline{r}}\cdot e\left(\frac{-x}{24}\right)\\
		&\times\left(\frac{\pi y}{6}\right)^{\frac{1}{4}-\frac{r}{2}} M_{\frac{1}{4}-\frac{r}{2},\frac{3}{4}-\frac{r}{2}}\left(\frac{\pi y}{6}\right)\cdot\frac{1}{24^s}\sum_{n\geq 0}\left(\frac{12}{n}\right)n^{2s}q^{\frac{n^2}{24}}\cdot\overline{f}(\tau)y^{2\nu}\frac{dxdy}{y^2}.
	\end{align*}
	We next expand $f$ as Fourier series, and a simple algebraic manipulation gives 
	\begin{align*}
		\langle [1/\eta,\eta]_\nu, f\rangle &=\frac{1}{24^\nu\Gamma(\frac{5}{2})} \sum\limits_{\substack{r+s=\nu \\ r,s\geq 0}}\frac{(-1)^r\Gamma(\nu-\frac{1}{2})\Gamma(\nu+\frac{1}{2})\Gamma(r-\frac{3}{2})}{s!r!\Gamma(r-\frac{1}{2})\Gamma(s+\frac{1}{2})\Gamma(\frac{-3}{2})}
		\cdot \left(\frac{\pi y}{6}\right)^{\frac{1}{4}-\frac{r}{2}} M_{\frac{1}{4}-\frac{r}{2},\frac{3}{4}-\frac{r}{2}}\left(\frac{\pi y}{6}\right)\\
		&\times\sum_{n\geq 1}\sum_{m\geq1}\int_0^\infty\int_0^1\left(\frac{12}{n}\right)n^{2s}a_f(m) \cdot e\left(\frac{24m+n^2-1}{24}\right)\exp\left(\frac{-\pi(24m+n^2)}{12}\right)y^{2\nu}\frac{dxdy}{y^2}.
	\end{align*}
	By observing that the integral with respect to $x$ vanishes unless $(n^2-1)-24m=0$ and noting the fact that $24| (n^2-1)$ if $\gcd(n,12)=1$, we get 
	\begin{align*}
		\langle [1/\eta,\eta]_\nu,f\rangle & = \frac{1}{24^\nu\Gamma(\frac{5}{2})}\sum\limits_{\substack{n\geq 1 \\ r+s=\nu \\ r,s\geq 0}} \frac{(-1)^r\Gamma(\nu-\frac{1}{2})\Gamma(\nu+\frac{1}{2})\Gamma(r-\frac{3}{2})}{s!r!\Gamma(r-\frac{1}{2})\Gamma(s+\frac{1}{2})\Gamma(-\frac{3}{2})}\cdot\leg{12}{n}\cdot n^{2s}\cdot a_f\left(\frac{n^2-1}{24}\right) \\
		&\times \int_0^\infty \left(\frac{\pi y}{6}\right)^{\frac{1}{4}-\frac{r}{2}} M_{\frac{1}{4}-\frac{r}{2},\frac{3}{4}-\frac{r}{2}}\left(\frac{\pi y}{6}\right)\cdot\exp\left(\frac{-\pi(2n^2-1)y}{12}\right)\cdot y^{2\nu-2}dy.
	\end{align*}
	This completes the proof of the lemma.
\end{proof}

\subsection{Twisted quadratic Dirichlet series and Petersson inner products}

We now turn to the final technical task. We describe the Petersson inner product of $[1/\eta,\eta]_\nu$ with a cusp form $f$ as an infinite weighted sum of values of twisted quadratic Dirichlet series. 
Namely, we prove the following key inner product formula.

\begin{theorem}\label{Final_lemma}
	If $f(\tau)\in S_{2\nu}$ has real Fourier coefficients, then we have
	$$
	D_f= 24^{\nu}\cdot \langle [1/\eta,\eta]_\nu,f\rangle.
	$$
\end{theorem}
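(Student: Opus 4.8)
The plan is to start from the formula in Lemma~\ref{PeterssonUnfolding-Intermediate} and evaluate the remaining one-dimensional integral in closed form. The integral in question is
$$
\int_0^\infty \left(\frac{\pi y}{6}\right)^{\frac{1}{4}-\frac{r}{2}} M_{\frac{1}{4}-\frac{r}{2},\frac{3}{4}-\frac{r}{2}}\left(\frac{\pi y}{6}\right)\cdot\exp\left(\frac{-\pi(2n^2-1)y}{12}\right)\cdot y^{2\nu-2}\,dy,
$$
which is a Laplace-type transform of a Whittaker function against a power of $y$. The standard tool is the integral representation (available in \cite{NIST} or Gradshteyn--Ryzhik) for $\int_0^\infty e^{-pt} t^{b-1} M_{\kappa,\mu}(ct)\,dt$ in terms of the Gauss hypergeometric function ${}_2F_1$ evaluated at $c/(p+c/2)$ or a similar argument; here the relevant ratio is built from $2n^2-1$ and the normalization $\pi/6$, and one checks that the argument collapses to something like $1/n^2$ (up to an elementary factor), because $\frac{\pi(2n^2-1)}{12} - \frac{\pi}{12} = \frac{\pi(2n^2-2)}{12}$ and $\frac{\pi(2n^2-1)}{12}+\frac{\pi}{12}=\frac{\pi n^2}{6}$. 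After substituting $t = \pi y/6$ and simplifying, I would expect the integral to become a constant (depending on $\nu$, $r$, $s$) times $n^{-(2\nu-1)}$ times a terminating ${}_2F_1$ whose $\nu-1$ (or $\nu-2$) terms, when expanded as a sum over an index $j$, produce exactly the sum over $j$ in the definition \eqref{InfiniteSum} of $D_f$.

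The key steps, in order, are: (1) Perform the substitution $t=\pi y/6$ to write the $y$-integral as a constant times $\int_0^\infty e^{-pt} t^{2\nu - \frac{7}{4} - \frac{r}{2}} M_{\frac14 - \frac r2, \frac34 - \frac r2}(t)\,dt$ with $p = (2n^2-1)/2$. (2) Apply the closed-form Laplace transform of $t^{b-1}M_{\kappa,\mu}(t)$, being careful that the parameter constraints ($\Re(b+\mu)>0$ etc.) hold, which is where $\nu \geq 6$ matters. This yields $\Gamma$-factors times $(p+\tfrac12)^{-\text{(exponent)}} {}_2F_1(\alpha,\beta;\gamma; \tfrac{1}{p+1/2})$ or equivalent; note $p+\tfrac12 = n^2$, so the power of $n$ comes out cleanly. (3) Because $\mu - \kappa = \tfrac12$ here, one of the numerator parameters of the ${}_2F_1$ is a nonpositive integer (namely related to $-(s) = -(\nu - r)$ or $-(\nu-r-1)$), so the series terminates; expand it as a finite sum over $j$ from $0$ to $\nu - 2$ (the top index being forced by the weight). (4) Sum over $n\geq 1$: the factor $\leg{12}{n} a_f((n^2-1)/24) \, n^{2s} \cdot n^{-(2\nu-1)-2j}$ is precisely $\leg{12}{n} a_f((n^2-1)/24) n^{-(2\nu+1+2(\nu-r-1-s??)+2j)}$ — after matching exponents this is exactly the Dirichlet series $D(f; 2\nu+1+2m+2j)$ once the remaining sum over $r$ (equivalently over an index $m$ running to $\infty$) is identified; here I anticipate that summing the $r+s=\nu$ contributions does \emph{not} by itself give the $m$-sum, so the honest bookkeeping is that the terminating ${}_2F_1$ has finitely many $j$-terms while a separate expansion (of $(p+\tfrac12)^{-1}$-type factors, or of the Whittaker function's confluent series) supplies the infinite $m$-sum. (5) Collect all $\Gamma$-factors, powers of $6$, $\pi$, $24$, and the Rankin--Cohen coefficients, and verify they assemble into $24^{-\nu} \beta(\nu,j,m)$ as defined before \eqref{InfiniteSum}; multiplying through by $24^\nu$ then gives $D_f = 24^\nu \langle [1/\eta,\eta]_\nu, f\rangle$.

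The main obstacle I expect is step (2)--(4): correctly identifying which hypergeometric identity produces \emph{two} summation indices ($j$ finite, $m$ infinite) out of a single Laplace transform, and then carrying the $\Gamma$-function arithmetic so that the messy constant — involving $\Gamma(\nu\pm\tfrac12)$, $\Gamma(r-\tfrac32)/\Gamma(r-\tfrac12)$, $\Gamma(\tfrac52)$, factorials, and powers of $6$ and $\pi$ — matches $\beta(\nu,j,m)$ on the nose, including the rising-factorial ratios $(\nu-j-1)^{\overline\nu}(\tfrac32)^{\overline j}/((-\tfrac12-j)^{\overline\nu}(\tfrac52)^{\overline j})$. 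A secondary subtlety is justifying the interchange of the $n$-sum with the integral and with the (now finite) $r$-sum; since $\nu\geq 6$ the integrand decays fast enough and $a_f((n^2-1)/24)$ has polynomial growth by Deligne's bound, so absolute convergence holds for $\Re(s)\geq 2\nu+1$, matching the half-plane in the definition of $D(f;s)$. Once the constant is pinned down, the theorem is immediate.
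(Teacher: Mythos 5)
Your overall strategy is the one the paper uses (Laplace transform of the Whittaker integral, reduction to a Gauss hypergeometric function at $1/n^2$, then a combinatorial collapse of the resulting sums), but two of the steps you flag as ``expected'' are exactly where the proof lives, and one of them is misidentified. In step (3) you assert that because $\mu-\kappa=\tfrac12$ one numerator parameter of the ${}_2F_1$ is a nonpositive integer, so the series terminates. That is false: the Laplace transform \cite[13.23.1]{NIST} produces ${}_2F_1\bigl(1,\,2\nu-r+\tfrac12;\,\tfrac52-r;\,1/n^2\bigr)$, whose first numerator parameter is $1$, so the series does \emph{not} terminate. The paper first applies Euler's transformation ${}_2F_1(a,b;c;z)=(1-z)^{c-a-b}{}_2F_1(c-a,c-b;c;z)$, which yields the prefactor $(1-z)^{1-2\nu}$ together with the genuinely terminating ${}_2F_1\bigl(\tfrac32-r,\,2-2\nu;\,\tfrac52-r;\,z\bigr)$ (terminating because $2-2\nu\leq 0$, giving the finite sum of length $2\nu-1$, not one of length $\nu-r$ as you guessed). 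The binomial expansion of $(1-1/n^2)^{1-2\nu}$ is then precisely the source of the infinite $m$-sum in $D_f$ --- you correctly suspected a ``separate expansion'' was needed but did not locate it.

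The second, and larger, gap is the sum over $r$ with $r+s=\nu$, which you explicitly leave as ``the main obstacle.'' In the paper this is resolved by interchanging the $r$- and $i$-sums and recognizing the inner $r$-sum as a balanced ${}_3F_2$ evaluated at $1$, to which the Pfaff--Saalsch\"utz identity applies. That evaluation is what produces the rising-factorial ratio $(\nu-i-1)^{\overline{\nu}}/\bigl((-\tfrac12-i)^{\overline{\nu}}(\nu+\tfrac12)^{\overline{\nu}}\bigr)$ appearing in $\beta(\nu,j,m)$, and --- via the vanishing of $(\nu-i-1)^{\overline{\nu}}$ for $i\geq\nu-1$ --- is also what truncates the outer sum from $0\leq i\leq 2\nu-2$ down to $0\leq i\leq\nu-2$, matching the range $0\leq j\leq\nu-2$ in \eqref{InfiniteSum}. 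Without Euler's transformation and Pfaff--Saalsch\"utz, the constants cannot be assembled into $\beta(\nu,j,m)$, so as written the proposal does not yet constitute a proof; with those two identities supplied, it becomes essentially the paper's argument.
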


To prove this theorem, we require a number of lemmas.  The following lemma describes the evaluation of the integral in Lemma~\ref{PeterssonUnfolding-Intermediate}.

\begin{lemma}\label{PeterssonUnfolding-2F1}
	If $n>1$ and $0\leq r\leq \nu,$ then
	\begin{align*}
	&\int_0^\infty \left(\frac{\pi y}{6}\right)^{\frac{1}{4}-\frac{r}{2}} M_{\frac{1}{4}-\frac{r}{2},\frac{3}{4}-\frac{r}{2}}\left(\frac{\pi y}{6}\right)\cdot\exp\left(\frac{-\pi(2n^2-1)y}{12}\right)\cdot y^{2\nu-2}dy\\
	 &\hskip1.5in= \left(\frac{6}{\pi}\right)^{2\nu-1}\cdot\frac{\Gamma(2\nu-r+\frac{1}{2})}{n^{4\nu-2r+1}}\cdot{_2F_1}\left(\begin{array}{cc}
		1 & \frac{1}{2}+2\nu-r\\
		~& \frac{5}{2}-r \\
	\end{array}\mid \frac{1}{n^2}\right),
	\end{align*}
	where ${_2F_1}$ is the classical hypergeometric function.
\end{lemma}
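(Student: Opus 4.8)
The plan is to reduce the left-hand integral to a standard Laplace transform of a Whittaker $M$-function. First I would make the linear substitution $t=\pi y/6$. Then $\exp\!\left(-\pi(2n^2-1)y/12\right)=\exp\!\left(-(n^2-\tfrac12)t\right)$, $\left(\pi y/6\right)^{\frac14-\frac r2}=t^{\frac14-\frac r2}$, and $y^{2\nu-2}\,dy=(6/\pi)^{2\nu-1}t^{2\nu-2}\,dt$, so the integral becomes
\[
\left(\frac{6}{\pi}\right)^{2\nu-1}\int_0^\infty t^{\,2\nu-2+\frac14-\frac r2}\,M_{\frac14-\frac r2,\,\frac34-\frac r2}(t)\;e^{-(n^2-\frac12)t}\,dt .
\]

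Next I would substitute the confluent hypergeometric representation $M_{\kappa,\mu}(t)=e^{-t/2}t^{\mu+\frac12}\,{}_1F_1\!\left(\mu-\kappa+\tfrac12;2\mu+1;t\right)$. The fortunate feature that makes everything collapse is that for $\kappa=\tfrac14-\tfrac r2$ and $\mu=\tfrac34-\tfrac r2$ one has $\mu-\kappa+\tfrac12=1$ and $2\mu+1=\tfrac52-r$, while the factor $e^{-t/2}$ combines with $e^{-(n^2-1/2)t}$ to give $e^{-n^2t}$; so the integral equals $(6/\pi)^{2\nu-1}$ times $\int_0^\infty t^{\lambda-1}e^{-n^2t}\,{}_1F_1(1;\tfrac52-r;t)\,dt$ with $\lambda:=2\nu-r+\tfrac12$. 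Then I would invoke the classical evaluation
\[
\int_0^\infty e^{-pt}\,t^{\lambda-1}\,{}_1F_1(a;c;t)\,dt=\Gamma(\lambda)\,p^{-\lambda}\,{}_2F_1\!\left(a,\lambda;c;\tfrac1p\right),
\]
valid for $\Re(\lambda)>0$ and $\Re(p)>1$, which follows by expanding ${}_1F_1$ as a power series and integrating term by term (or can be quoted from a standard integral table such as Gradshteyn--Ryzhik). Taking $a=1$, $c=\tfrac52-r$, $p=n^2$ and reading off $\Gamma(\lambda)=\Gamma(2\nu-r+\tfrac12)$ and $p^{-\lambda}=n^{-(4\nu-2r+1)}$ produces exactly the claimed right-hand side once the prefactor $(6/\pi)^{2\nu-1}$ is reinstated, with the hypergeometric parameters matching because $\tfrac52-r$ sits in the denominator slot.

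I do not anticipate a genuine obstacle; the only real content is the bookkeeping of the shifted Whittaker indices and checking that the Gamma-factors collapse precisely as asserted. The one point worth a sentence of care is convergence. Near $t=\infty$ the integrand is a power of $t$ times $e^{(1-n^2)t}$, because ${}_1F_1(1;c;t)$ grows like $e^t$ up to a power of $t$; hence the Laplace integral converges exactly when $n^2>1$, which is why the hypothesis reads $n>1$ (and, $n$ being an integer, this forces $n\ge2$). Near $t=0$ the integrand is $\sim t^{\lambda-1}=t^{\,2\nu-r-1/2}$, which is integrable since $\lambda=2\nu-r+\tfrac12>0$ (using $r\le\nu$). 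Thus the term-by-term interchange is legitimate on the range $n\ge2$, which is all that Lemma~\ref{PeterssonUnfolding-Intermediate} requires.
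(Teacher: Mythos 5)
Your proof is correct and follows essentially the same route as the paper: the same substitution $t=\pi y/6$, followed by the Laplace-transform evaluation of the Whittaker integral (the paper quotes DLMF 13.23.1 directly, which is exactly the identity you re-derive from the ${}_1F_1$ representation of $M_{\kappa,\mu}$). Your added remarks on convergence at $t=0$ and $t=\infty$, explaining the hypothesis $n>1$, are a welcome supplement but do not change the argument.
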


\begin{proof}We first perform the change variables $t = \frac{\pi y}{6}$ to obtain the integral
	\begin{align*}
		\left(\frac{6}{\pi}\right)^{2\nu - 1}\int_0^\infty t^{\frac{1}{4}-\frac{r}{2}+2\nu-2} M_{\frac{1}{4}-\frac{r}{2},\frac{3}{4}-\frac{r}{2}}(t)\cdot\exp\left(-\left(n^2 - \frac{1}{2}\right)t\right)dt.
	\end{align*}
	We then make use of the following integral identity (see \cite[13.23.1]{NIST})
	\begin{align*}
		\int_0^\infty t^{\lambda-1} M_{\kappa,\mu}(t)\cdot\exp\left(-\left(z - \frac{1}{2}\right)t\right)dt &= \frac{\Gamma(\lambda+\mu+\frac{1}{2})}{z^{\lambda+\mu+\frac{1}{2}}} \cdot {_2F_1}\left(\begin{array}{cc}
			\frac{1}{2}+\mu-\kappa & \frac{1}{2}+\lambda+\mu \\
			~& 1 + 2\mu \\
		\end{array}\mid \frac{1}{z}\right),
	\end{align*}
	which grants
	\begin{displaymath}
	\begin{split}
		&\left(\frac{6}{\pi}\right)^{2\nu - 1}\int_0^\infty t^{\frac{1}{4}-\frac{r}{2}+2\nu-2} M_{\frac{1}{4}-\frac{r}{2},\frac{3}{4}-\frac{r}{2}}(t)\cdot\exp\left(-\left(n^2 - \frac{1}{2}\right)t\right)dt \\
		&\hskip1.5in= \left(\frac{6}{\pi}\right)^{2\nu - 1} \cdot \frac{\Gamma(2\nu - r + \frac{1}{2})}{n^{4\nu - 2r+1}} \cdot{_2F_1}\left(\begin{array}{cc}
			1 & \frac{1}{2}+2\nu-r\\
			~& \frac{5}{2}-r \\
		\end{array}\mid \frac{1}{n^2}\right).
	\end{split}
	\end{displaymath}
\end{proof}
By Lemmas~\ref{PeterssonUnfolding-Intermediate} and \ref{PeterssonUnfolding-2F1}, if $f(\tau)\in S_{2\nu}$ has real Fourier coefficients, then
$$
\langle[1/\eta,\eta]_\nu,f\rangle = \frac{1}{24^\nu}\cdot\left(\frac{6}{\pi}\right)^{2\nu-1}\sum\limits_{n>1} a_f\left(\frac{n^2-1}{24}\right)\cdot\frac{\leg{12}{n}}{n^{2\nu+1}}\cdot\omega(\nu,n),
$$
where
$$
\omega(\nu,n):=\sum\limits_{r=0}^\nu \frac{(-1)^r\Gamma(\nu-\frac{1}{2})\Gamma(\nu+\frac{1}{2})\Gamma(\frac{1}{2}+2\nu-r)}{(\nu-r)!r!\Gamma(r-\frac{1}{2})\Gamma(\nu-r+\frac{1}{2})}\cdot \left(\frac{-3}{2}\right)^{\overline{r}}\cdot{_2F_1}\left(\begin{array}{cc}
	1 & \frac{1}{2}+2\nu-r\\
	~& \frac{5}{2}-r \\
\end{array}\mid \frac{1}{n^2}\right).
$$

The following two lemmas are necessary to rewrite $\omega(\nu,n)$ in a form which will give rise to the twisted Dirichlet sums $D(f,s).$ 

\begin{lemma}\label{Combinatorics1}
	If $\nu\geq 1$ and $0\leq r\leq \nu,$ then
	$$
	{_2F_1}\left(\begin{array}{cc}
		1 & \frac{1}{2}+2\nu-r\\
		~& \frac{5}{2}-r \\
	\end{array}\mid z\right) = (1-z)^{1-2\nu}\sum\limits_{i=0}^{2\nu-2}(-1)^i\binom{2\nu-2}{i}\frac{\left(\frac{3}{2}-r\right)^{\overline{i}}}{\left(\frac{5}{2}-r\right)^{\overline{i}}}z^i.
	$$
\end{lemma}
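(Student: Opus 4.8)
The plan is to recognize the left-hand side as a hypergeometric function to which \emph{Euler's transformation} applies, turning it into a terminating series. Write $a=1$, $b=\tfrac12+2\nu-r$, and $c=\tfrac52-r$, so the left-hand side is ${}_2F_1(a,b;c;z)$. An immediate computation gives
$$
c-a-b = 1-2\nu, \qquad c-a = \tfrac32-r, \qquad c-b = 2-2\nu.
$$
Euler's transformation ${}_2F_1(a,b;c;z) = (1-z)^{c-a-b}\,{}_2F_1(c-a,\,c-b;\,c;\,z)$, valid for $|z|<1$ (which suffices, since in the application $z=1/n^2$ with $n>1$), therefore yields
$$
{}_2F_1\!\left(1,\ \tfrac12+2\nu-r;\ \tfrac52-r;\ z\right) = (1-z)^{1-2\nu}\,{}_2F_1\!\left(\tfrac32-r,\ 2-2\nu;\ \tfrac52-r;\ z\right).
$$

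Next I would expand the transformed function using its defining power series
$$
{}_2F_1\!\left(\tfrac32-r,\ 2-2\nu;\ \tfrac52-r;\ z\right) = \sum_{i\ge 0}\frac{\left(\tfrac32-r\right)^{\overline i}\,(2-2\nu)^{\overline i}}{\left(\tfrac52-r\right)^{\overline i}\,i!}\,z^i.
$$
Since $\nu\ge 1$, the parameter $2-2\nu$ is a non-positive integer, so $(2-2\nu)^{\overline i}=0$ once $i\ge 2\nu-1$ and the sum terminates at $i=2\nu-2$. It then only remains to identify the coefficient: putting $N:=2\nu-2$, the rising factorial of a negative integer is
$$
(2-2\nu)^{\overline i} = (-N)(-N+1)\cdots(-N+i-1) = (-1)^i\,N(N-1)\cdots(N-i+1) = (-1)^i\,\frac{N!}{(N-i)!},
$$
so that $(2-2\nu)^{\overline i}/i! = (-1)^i\binom{2\nu-2}{i}$. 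Substituting this into the series gives exactly the claimed finite sum, completing the proof.

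This is a short deduction from classical facts, and I do not anticipate a genuine obstacle; the only care needed is the parameter bookkeeping in Euler's transformation and the elementary rewriting of $(2-2\nu)^{\overline i}$ as a signed binomial coefficient. If one preferred to avoid quoting Euler's transformation, an alternative is to expand both sides of the asserted identity as power series in $z$ and match the coefficients of $z^i$ via the recursion coming from the hypergeometric differential equation, together with agreement at $z=0$; but the transformation route is by far the cleanest.
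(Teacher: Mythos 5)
Your proposal is correct and follows essentially the same route as the paper: apply Euler's transformation to obtain $(1-z)^{1-2\nu}\,{}_2F_1\bigl(\tfrac32-r,\,2-2\nu;\,\tfrac52-r;\,z\bigr)$, observe that the factor $(2-2\nu)^{\overline{i}}$ truncates the series at $i=2\nu-2$, and rewrite $(2-2\nu)^{\overline{i}}/i!$ as $(-1)^i\binom{2\nu-2}{i}$. The parameter bookkeeping and the signed-binomial identification are exactly as in the paper's argument.
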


\begin{proof}
We first apply Euler's transformation formula (see \cite[2.2.5]{AAR}) to obtain
\begin{align*}
	{_2F_1}\left(\begin{array}{cc}
		1 & \frac{1}{2}+2\nu-r\\
		~& \frac{5}{2}-r \\
	\end{array}\mid z\right) = (1-z)^{1-2\nu}{_2F_1}\left(\begin{array}{cc}
		\frac{3}{2}-r & 2-2\nu\\
		~& \frac{5}{2}-r \\
	\end{array}\mid z\right).
\end{align*}
We write this as an explicit series,
\begin{align*}
	{_2F_1}\left(\begin{array}{cc}
		\frac{3}{2}-r & 2-2\nu\\
		~& \frac{5}{2}-r \\
	\end{array}\mid z\right) = \sum_{i=0}^{\infty} \frac{(\frac{3}{2} - r)^{\overline{i}}(2 - 2\nu)^{\overline{i}}}{i!(\frac{5}{2} - r)^{\overline{i}}} z^i,
\end{align*}
and observe that $2 - 2\nu \leq 0$, whereby $(2 - 2\nu)^{\overline{i}} = 0$ if $i > 2\nu - 2$. Thus, by a straightforward rewriting,
\begin{align*}
	{_2F_1}\left(\begin{array}{cc}
		\frac{3}{2}-r & 2-2\nu\\
		~& \frac{5}{2}-r \\
	\end{array}\mid z\right) = \sum_{i=0}^{2\nu - 2} (-1)^i\binom{2\nu - 2}{i} \frac{(\frac{3}{2} - r)^{\overline{i}}}{(\frac{5}{2} - r)^{\overline{i}}} z^i,
\end{align*}
and the claim follows.
\end{proof}

\begin{lemma}\label{Combinatorics2}
	If $\nu\geq 1,$ then we have
	\begin{displaymath}
	\begin{split}
		&\sum\limits_{i=0}^{2\nu-2}(-1)^i z^i\binom{2\nu-2}{i}\sum\limits_{r=0}^\nu\frac{(-1)^r\left(\frac{-3}{2}\right)^{\overline{r}}\Gamma(2\nu+\frac{1}{2}-r)}{r!(\nu-r)!\Gamma(r-\frac{1}{2})\Gamma(\nu-r+\frac{1}{2})}\cdot\frac{\left(\frac{3}{2}-r\right)^{\overline{i}}}{\left(\frac{5}{2}-r\right)^{\overline{i}}} \\
		&\hskip1.5in=\sum\limits_{i=0}^{\nu-2}(-1)^iz^i\binom{2\nu-2}{i}\frac{(\nu-i-1)^{\overline{\nu}}\left(\frac{3}{2}\right)^{\overline{i}}}{(-\frac{1}{2}-i)^{\overline{\nu}}\Gamma(\frac{-1}{2})\left(\frac{5}{2}\right)^{\overline{i}}}.
	\end{split}
	\end{displaymath}
\end{lemma}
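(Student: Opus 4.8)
The plan is to read the claimed equality as an identity of polynomials in $z$ and to compare the coefficients of $z^i$ for each $0\le i\le 2\nu-2$. Dividing out the common factor $(-1)^i\binom{2\nu-2}{i}$ (nonzero in this range) and writing $c:=i+\tfrac32$, the statement to be proved becomes: the inner sum
\[
S_i:=\sum_{r=0}^{\nu}\frac{(-1)^r\left(\frac{-3}{2}\right)^{\overline{r}}\Gamma\!\left(2\nu+\tfrac12-r\right)}{r!\,(\nu-r)!\,\Gamma\!\left(r-\tfrac12\right)\Gamma\!\left(\nu-r+\tfrac12\right)}\cdot\frac{\left(\frac{3}{2}-r\right)^{\overline{i}}}{\left(\frac{5}{2}-r\right)^{\overline{i}}}
\]
vanishes for $\nu-1\le i\le 2\nu-2$, and equals $\dfrac{(\nu-i-1)^{\overline{\nu}}\left(\frac{3}{2}\right)^{\overline{i}}}{(-\frac12-i)^{\overline{\nu}}\,\Gamma(-\frac12)\left(\frac{5}{2}\right)^{\overline{i}}}$ for $0\le i\le\nu-2$. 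Since the right-hand side of the lemma has no terms beyond $z^{\nu-2}$, these two facts are exactly what is required.

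First I would collapse the three ``long'' quotients occurring in $S_i$. The ratio of rising factorials telescopes to $\left(\tfrac32-r\right)^{\overline{i}}/\left(\tfrac52-r\right)^{\overline{i}}=(\tfrac32-r)/(\tfrac32-r+i)$; expressing the Gamma factors through $(a)^{\overline{n}}=\Gamma(a+n)/\Gamma(a)$ gives $\left(\tfrac{-3}{2}\right)^{\overline{r}}/\Gamma\!\left(r-\tfrac12\right)=1/\big(\Gamma(-\tfrac32)(r-\tfrac32)\big)$ and $\Gamma\!\left(2\nu+\tfrac12-r\right)/\Gamma\!\left(\nu-r+\tfrac12\right)=(\nu+\tfrac12-r)^{\overline{\nu}}$, a polynomial in $r$ of degree $\nu$. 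Combining these, and using $\frac{1}{r-3/2}\cdot\frac{3/2-r}{3/2-r+i}=\frac{1}{r-c}$ together with $\frac{1}{r!(\nu-r)!}=\frac{1}{\nu!}\binom{\nu}{r}$, I would rewrite
\[
S_i=\frac{1}{\nu!\,\Gamma(-\tfrac32)}\sum_{r=0}^{\nu}(-1)^r\binom{\nu}{r}\frac{P(r)}{r-c},\qquad P(r):=(\nu+\tfrac12-r)^{\overline{\nu}}.
\]

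The heart of the argument is the evaluation of this last sum by partial fractions. Since $\deg P=\nu$, polynomial division gives $P(r)=P(c)+(r-c)Q(r)$ with $\deg Q=\nu-1$, so $\frac{P(r)}{r-c}=Q(r)+\frac{P(c)}{r-c}$; the alternating binomial sum kills the $Q$-term because the $\nu$th finite difference of a polynomial of degree $<\nu$ vanishes, and $\sum_{r=0}^{\nu}(-1)^r\binom{\nu}{r}/(r-c)=\nu!/(-c)^{\overline{\nu+1}}$ by the standard partial-fraction identity (equivalently the Beta integral). This leaves $S_i=P(c)\big/\big(\Gamma(-\tfrac32)(-c)^{\overline{\nu+1}}\big)$ with $P(c)=(\nu-i-1)^{\overline{\nu}}$ and $(-c)^{\overline{\nu+1}}=(-i-\tfrac32)^{\overline{\nu+1}}$; one notes in passing that no denominator vanishes, since $-i-\tfrac32+j\ne0$ for $0\le j\le\nu$ and $\Gamma$ has no pole at half-integers. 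For $\nu-1\le i\le 2\nu-2$ the numerator $P(c)=(\nu-i-1)(\nu-i)\cdots(2\nu-i-2)$ contains a zero factor, so $S_i=0$, matching the right-hand side. For $0\le i\le\nu-2$ I would factor $(-i-\tfrac32)^{\overline{\nu+1}}=(-i-\tfrac32)(-i-\tfrac12)^{\overline{\nu}}$, telescope $\left(\tfrac32\right)^{\overline{i}}/\left(\tfrac52\right)^{\overline{i}}=\tfrac{3/2}{3/2+i}$, and use $\Gamma(-\tfrac12)=-\tfrac32\Gamma(-\tfrac32)$; a one-line simplification then identifies $S_i$ with the claimed coefficient.

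I expect the only genuine difficulty to be bookkeeping. The essential observation is that each $z^i$-coefficient is an elementary finite sum rather than a true ${_2F_1}$, so that the classical ``partial fractions plus $\nu$th finite difference'' device applies; after that, the work is pushing the half-integer Gamma and rising-factorial manipulations through without sign or indexing errors, which is routine once the reduction to $S_i=\frac{1}{\nu!\Gamma(-3/2)}\sum_r(-1)^r\binom{\nu}{r}P(r)/(r-c)$ is in hand.
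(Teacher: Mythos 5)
Your proposal is correct, and it reaches the identity by a genuinely different route than the paper. The paper fixes $i$, computes the term ratio $\alpha_{r+1}/\alpha_r$ to recognize the inner sum as $\alpha_0\cdot{_3F_2}\left(-\nu,-\tfrac32-i,-\nu+\tfrac12;-\tfrac12-i,-2\nu+\tfrac12;1\right)$, and then invokes the Pfaff--Saalsch\"utz identity for this terminating Saalsch\"utzian series; the vanishing for $i\geq\nu-1$ is read off from the factor $(\nu-i-1)^{\overline{\nu}}$ exactly as in your argument. You instead collapse the Gamma quotients so that the summand becomes $(-1)^r\binom{\nu}{r}P(r)/(r-c)$ with $P$ of degree $\nu$ and a single simple pole at the half-integer $c=i+\tfrac32$, and then evaluate by writing $P(r)=P(c)+(r-c)Q(r)$, killing $Q$ with the $\nu$th finite difference and evaluating the remaining sum with the classical partial-fraction identity $\sum_{r=0}^{\nu}(-1)^r\binom{\nu}{r}\frac{1}{x+r}=\frac{\nu!}{(x)^{\overline{\nu+1}}}$. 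All the intermediate simplifications you use check out, including the telescoping $\left(\tfrac32-r\right)^{\overline{i}}/\left(\tfrac52-r\right)^{\overline{i}}=(\tfrac32-r)/(\tfrac32-r+i)$ and the final reconciliation via $\Gamma(-\tfrac12)=-\tfrac32\,\Gamma(-\tfrac32)$, and your closed form $S_i=P(c)/\bigl(\Gamma(-\tfrac32)(-c)^{\overline{\nu+1}}\bigr)$ agrees with the paper's right-hand side after the factorization $(-c)^{\overline{\nu+1}}=(-i-\tfrac32)(-\tfrac12-i)^{\overline{\nu}}$. What your approach buys is self-containedness: the partial-fractions-plus-finite-differences device is an elementary proof of precisely the instance of Pfaff--Saalsch\"utz needed here, so no hypergeometric summation theorem need be cited; what the paper's approach buys is brevity, since once the ratio test identifies the ${_3F_2}$ the evaluation is a one-line appeal to \cite[Thm.~2.2.6]{AAR}.
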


\begin{proof}
	For fixed $0 \leq i \leq 2\nu - 2$, set
	\begin{align*}
		\alpha_r &= \frac{(-1)^r\left(\frac{-3}{2}\right)^{\overline{r}}\Gamma(2\nu+\frac{1}{2}-r)}{r!(\nu-r)!\Gamma(r-\frac{1}{2})\Gamma(\nu-r+\frac{1}{2})}\cdot\frac{\left(\frac{3}{2}-r\right)^{\overline{i}}}{\left(\frac{5}{2}-r\right)^{\overline{i}}}.
	\end{align*}
	We find by straightforward algebraic manipulation that
	\begin{align*}
		\frac{\alpha_{r+1}}{\alpha_r} = \frac{(r-\nu)(r-\frac{3}{2}-i)(r-\nu+\frac{1}{2})}{(r+1)(r-\frac{1}{2}-i)(r-2\nu+\frac{1}{2})},
	\end{align*}
	whereby we see that, since $\alpha_r = 0$ if $r \geq \nu + 1$, 
	\begin{align*}
		\sum_{r=0}^{\nu} \alpha_r = \alpha_0 \cdot{_3F_2}\left(\begin{array}{ccc}
			-\nu & - \frac{3}{2} - i & -\nu + \frac{1}{2} \\[4pt]
			~& -\frac{1}{2} - i & -2\nu + \frac{1}{2} \\
		\end{array}\mid 1\right).
	\end{align*}
	We then apply the Pfaff-Saalsch\"{u}tz identity (\cite[Thm. 2.2.6]{AAR}) to obtain
	\begin{align*}
		\sum_{r=0}^{\nu} \alpha_r = \alpha_0 \cdot \frac{(\nu - i - 1)^{\overline{\nu}}}{(-\frac{1}{2}-i)^{\overline{\nu}}(\nu + \frac{1}{2})^{\overline{\nu}}}.
	\end{align*}
	Writing out $\alpha_0$ explicitly, then noting that $(\nu - i - 1)^{\overline{\nu}} = 0$ if $i \geq \nu -1$, we obtain the lemma.
\end{proof}

With these lemmas, we prove Theorem~\ref{Final_lemma}.

\begin{proof}[Proof of Theorem~\ref{Final_lemma}]
Combining Lemmas~\ref{PeterssonUnfolding-Intermediate} and \ref{PeterssonUnfolding-2F1}, we get
\begin{align*}
	\langle[1/\eta,\eta]_\nu,f\rangle =& \frac{1}{24^\nu\Gamma\left(\frac{5}{2}\right)}\cdot\left(\frac{6}{\pi}\right)^{2\nu-1}\sum\limits_{n>1} a_f\left(\frac{n^2-1}{24}\right)\cdot\frac{\leg{12}{n}}{n^{2\nu+1}}\cdot\sum\limits_{r=0}^\nu \frac{(-1)^r\Gamma(\nu-\frac{1}{2})\Gamma(\nu+\frac{1}{2})}{(\nu-r)!r!\Gamma(r-\frac{1}{2})}\\
	&\times \frac{\Gamma(\frac{1}{2}+2\nu-r)}{\Gamma(\nu-r+\frac{1}{2})}\cdot \left(\frac{-3}{2}\right)^{\overline{r}}\cdot{_2F_1}\left(\begin{array}{cc}
		1 & \frac{1}{2}+2\nu-r\\
		~& \frac{5}{2}-r \\
	\end{array}\mid \frac{1}{n^2}\right).
\end{align*}
Applying Lemmas~\ref{Combinatorics1} and \ref{Combinatorics2} consecutively with $z=1/n^2$, we obtain
\begin{align*}
	\langle[1/\eta,\eta]_\nu,f\rangle =& \frac{\Gamma(\nu-\frac{1}{2})\Gamma(\nu+\frac{1}{2})}{24^\nu\Gamma\left(\frac{5}{2}\right)}\cdot\left(\frac{6}{\pi}\right)^{2\nu-1}\sum\limits_{n>1} a_f\left(\frac{n^2-1}{24}\right)\cdot\frac{\leg{12}{n}}{n^{2\nu+1}}\cdot\frac{1}{(1-\frac{1}{n^2})^{2\nu-1}}\\
	&\times \sum_{i=0}^{\nu-2}(-1)^i\binom{2\nu-2}{i}\frac{1}{n^{2i}}
	\cdot\frac{(\nu-i-1)^{\overline{\nu}}\left(\frac{3}{2}\right)^{\overline{i}}}{(-\frac{1}{2}-i)^{\overline{\nu}}\cdot\Gamma(-\frac{1}{2})\cdot(\frac{5}{2})^{\overline{i}}}.
\end{align*}
Using the Taylor series expansion of $\left(\frac{1}{1-1/n^2}\right)^{2\nu-1}$ at infinity ($z=\frac{1}{n^2}$ at $z=0$) and changing the order of summation and performing straightforward manipulations, we obtain
\begin{align*}
	\langle[1/\eta,\eta]_\nu,f\rangle =& \frac{\Gamma(\nu-\frac{1}{2})\Gamma(\nu+\frac{1}{2})}{2\sqrt{\pi}\cdot 24^\nu\Gamma\left(\frac{5}{2}\right)}\cdot\left(\frac{6}{\pi}\right)^{2\nu-1} \sum_{i=0}^{\nu-2}\sum_{m=0}^\infty (-1)^{i+1}\binom{2\nu-2}{i}\binom{2\nu+m-2}{m}\frac{1}{n^{2i+2m}}
	\\
	&\times\frac{(\nu-i-1)^{\overline{\nu}}\left(\frac{3}{2}\right)^{\overline{i}}}{(-\frac{1}{2}-i)^{\overline{\nu}}\cdot(\frac{5}{2})^{\overline{i}}}\sum\limits_{n>1} a_f\left(\frac{n^2-1}{24}\right)\cdot\frac{\leg{12}{n}}{n^{2\nu+1}}\\
	&=\frac{1}{24^\nu}\sum_{i=0}^{\nu-2}\sum_{m=0}^\infty\beta(\nu,i,m)\sum\limits_{n>1} a_f\left(\frac{n^2-1}{24}\right)\cdot\frac{\leg{12}{n}}{n^{2\nu+2i+2m+1}},
\end{align*}
where 
\begin{align*}
	\beta(\nu,i,m)=\frac{(-1)^{i+1}\Gamma(\nu-\frac{1}{2})\Gamma(\nu+\frac{1}{2})}{2\sqrt{\pi}\cdot \Gamma\left(\frac{5}{2}\right)}\cdot\left(\frac{6}{\pi}\right)^{2\nu-1}\frac{(2\nu+m-2)!}{i!\cdot m!\cdot (2\nu-i-2)!}\cdot\frac{(\nu-i-1)^{\overline{\nu}}\left(\frac{3}{2}\right)^{\overline{i}}}{(-\frac{1}{2}-i)^{\overline{\nu}}\cdot(\frac{5}{2})^{\overline{i}}}.
\end{align*}
Thanks to (\ref{InfiniteSum}), this completes the proof.
\end{proof}

\section{Proof of Theorem~\ref{theorem2}}
Here we prove Theorem~\ref{theorem2} using the results of the previous section.
Thanks to Proposition~\ref{RankinCohenModularity}, we have that $P_{\nu}:=[1/\eta,\eta]_\nu$ is a holomorphic modular form of weight $2\nu$ on $\SL_2(\mathbb{Z})$. Hence, we have 
\begin{align}\label{eq4.1}
P_{\nu}=[1/\eta,\eta]_\nu=\alpha_{\nu}  E_{2\nu}(\tau)+\sum_{i}\gamma_i\ f_i(\tau),
\end{align}
where the $f_i$'s are an orthogonal basis of Hecke eigenforms for $S_{2\nu}$. It is well-known that these Hecke eigenforms   have real Fourier coefficients (see p. 105 of  \cite{Serre}), and also satisfy
$$
f_i \ | \ T_n = a_{f_i}(n)\ f_i,
$$
where $f_i(\tau)=q+\sum_{n\geq 2}a_{f_i}(n)\ q^n$. Using the orthogonality of $f_i$'s, we obtain
\begin{align}\label{eq4.2}
\langle[1/\eta,\eta]_\nu,f_i\rangle=\gamma_i\ \langle f_i,f_i\rangle.
\end{align}
By combining \eqref{eq4.1}, \eqref{eq4.2} with Theorem~\ref{Final_lemma}, we obtain an explicit description of $P_{\nu}$ involving the coefficients of $E_{2\nu}$, Hecke eigenvalues, and infinite sums of values of twisted Dirichlet series. 
Finally, by comparing the coefficients of $q^n$ and employing (\ref{series_side}) to solve for $p(n)$, we obtain the claimed recurrence relations.

\end{document}